\newcounter{nushka}
\newtheorem{thm}{Theorem}
\newtheorem{lem}[nushka]{Lemma}
\newtheorem{cor}{Corollary}
\theoremstyle{definition}
\newcommand{\R}{\mathbb R}
\newcommand{\C}{\mathbb C}
\newcommand{\e}{\varepsilon}
\newcommand{\D}{\mathbb D}
\newcommand{\T}{\mathbb T}
\newcommand{\cT}{\mathcal{T}}
\newcommand{\cL}{\mathcal L}
\newcommand{\cE}{\mathcal E}
\newcommand{\wt}{\widetilde}
\renewcommand{\ge}{\geqslant}
\renewcommand{\le}{\leqslant}
\newcommand{\rest}{{\lfloor}}
\newcommand{\vf}{\varphi}
\newcommand{\fz}{{\mathfrak{z}}}
\newcommand{\z}{\zeta}
\begin{document}

\title[Mean type and Martin functions]{Mean type of functions of bounded characteristic and Martin functions in Denjoy domains}

\author{Alexander Volberg}
\address{Alexander Volberg, Department of Mathematics,  Michigan State University, East Lansing, Michigan, USA}
\thanks{A. V. was supported by NSF grant DMS 1301579}

\author{Peter Yuditskii}
\address{Peter Yuditskii, Abteilung der Dynamische System, Johannes Kepler Universit\"at, Linz, Austria}
 \thanks{P.Yu. was supported by the Austrian Science Fund, project no: P25591-N25}
\date{\today}

\begin{abstract}
Functions of bounded characteristic in simply connected domains have a classical factorization to Blaschke, outer and singular inner parts. The latter has a singular measure on the boundary assigned to it. The exponential speed
of change of a function when approaching a point of a boundary (mean type) corresponds to a point mass at this point.
In this paper we consider the analogous relation for functions in arbitrary infinitely connected (Denjoy) domains. The factorization result holds of course with one important addition: all functions involved become multiple valued even though the initial function was single valued. The mean type now can be measured by using the Martin function of the domain. But this result does not follow from the lifting to the universal covering of the domain because of the simple (but interesting) reason that the mean types of the original and the lifted functions can be completely different. 
\end{abstract}

\maketitle

\section{Introduction and Motivation}
\label{Intro}

In the theory of de Branges \cite{dB} the main objects are various classes of meromorphic functions $f$ in $\C$ such that $f\rest \C_+\cup\C_-$ is a function of bounded characteristic in $\C_+$ and in $\C_-$. Recall that functions of bounded characteristic in, say, $\C_+$, are just the ratios of two bounded analytic functions in $\C_+$, $\vf_1/\vf_2$, where $\vf_2$ is not identically zero.
The most famous such class is the class of Cartwright \cite{Lev}.

The following number is very important in de Branges' theory of entire function \cite{dB}:
\begin{equation}
\label{mt}
h=\limsup_{|z|\rightarrow \infty, z\in \C_+\cup\C_-} \frac{\log |f(z)|}{|z|}\,.
\end{equation}
This is, the so-called,  {\it mean type} of the given function.
This parameter plays a major part in the spectral theory of Dirac operators and $1D$ Schr\"odinger operators. Its precise role in the general theory of canonical systems was clarified by de Branges, see \cite[Section 39]{dB}. In fact, de Branges has shown the necessity of subtler grading parameter than the one given by the mean type. It is feasible that the growth in the scale of Martin function in Denjoy domains (discussed in the present article)  might serve as this subtler parameter.

Indeed, in the spectral theory of differential operators, in the theory of canonical systems, and in approximation theory one routinely encounters  more general classes of meromorphic or entire functions \cite{Yu}. Namely, let $E$ be a closed subset of $\R$, $\infty\in E$, and $\Omega= \C\setminus E$.
Then the class $B_E$ consists of   entire functions $f$ whose restriction to $\Omega$ are functions of bounded characteristic in $\Omega$. By that one should understand the following. Let $\pi:\C_+\rightarrow \Omega$ be a universal covering, $f$ is called a function of bounded characteristic in $\Omega$ if its lifting $f\circ \pi$ is  a function of bounded characteristic on $\C_+$.

Domain in $\C$ whose boundary belongs to $\R$ is called {\it Denjoy domain},  analytic properies of Denjoy domains were widely studied. Our $\Omega$ is a Denjoy domain. In Denjoy domain $\Omega$, $\infty\in \partial \Omega$, one can consider an important positive harmonic function called {\it the Martin function at infinity}.

In what follows we consider only sets $E\subset \R$ whose every point is Dirichlet regular.

{\bf Definition.}
Let $\Omega=\C\setminus E$, the symmetric Martin function $M$ at infinity is a non-zero positive harmonic function in $\Omega$ such that $M\rest E=0$ and $M(\bar z)=M(z)$.

The set of positive harmonic functions vanishing on $E$ form a one dimensional cone or a two dimensional cone, see \cite{Be}. 
In this article we will always have the situation that this cone is one dimensional, that is any Martin function is collinear to the symmetric one.

If one considers the cone of all positive harmonic function in a domain, its extremal rays form the so-called Martin boundary of the domain. 
One can read about Martin functions and Martin boundary in \cite{Be},\cite{Ha},  \cite{EYu} for example. 

{\bf Definition.} By symbol $B_E(h)$ we  denote the class of entire functions such that 1) they are of bounded characteristic (see above) in $\Omega=\C\setminus E$, that is $f\in N(\Omega)$, and
$$
2)\,\,\,\limsup_{|z|\rightarrow \infty, z\in \Omega} \frac{\log |f(z)|}{M(z)}\le h\,.
$$

We formulate now a factorization theorem for $f\in B_E(h)$. Analytic functions  $f\in N(\Omega)$, that is of bounded characteristsic in $\Omega$, have inner/outer factorization very similar to such classical factorization in the disc or half-plane \cite{Ne}, \cite{Ga}. Outer part of $F$ is $F_{out}$ such that
$$
\log F_{out}(z) = u+i\wt u\,,\,\, \,z\in \Omega,
$$
where
$$
u(z)= \int_E \log |F(t)| d\omega_\Omega(t,z),
$$ $\omega(t, z)=\omega_\Omega(t,z)$ denotes the harmonic measure of $\Omega$ with pole at $z\in \Omega$, and $\wt u$ is a harmonic conjugate of $u$ in $\Omega$.

Generically these functions $\wt u$ and $F_{out}$ are not single valued.

Blaschke products in $\Omega$ are defined as follows. Let $\fz$ be a universal covering of $\Omega$ by the disc $\D$, let $\Gamma$ be a Fuchsian group of this covering: $\Omega=\D/\Gamma$. Blaschke factor in $\Omega$ is defined by fixing a point $\z_0\in \D$, $z_0:=\fz(\z_0)$, and considering
$$
b_{z_0}(\z):= \Pi_{ \gamma\in \Gamma}\frac{\zeta-\gamma(\z_0)}{1-\overline{\gamma(\z_0)}\z}\frac{\overline{\gamma(\z_0)}}{|\gamma(\z_0)|}\,.
$$

Denoting by $G(z, z_0)$ Green's function of $\Omega$, we can write
$$
G(\fz(\z), z_0) =\log \frac{1}{|b_{z_0}(\z)|}\,.
$$

We introduce now multi-valued character  automorphic holomorphic function 
$$
\Phi_{z_0}(z) := e^{- G(z, z_0) -i \wt{G(z, z_0)}}\,.
$$
It is sometimes called {\it complex Green's function} of $\Omega$ with pole at $z_0$.
Now we can write 
$$
\Phi_{z_0}(\fz(\z)) = b_{z_0}(\z)\,.
$$

{\bf Definition.} Blaschke product in $\Omega$ is $B_\Omega(z):= \Pi_{k=1}^\infty \Phi_{z_k}(z)$, where the Blaschke condition in $\Omega$ is satisfied: 
$\sum_{k=1}^\infty G(0, z_k) <\infty$.

\medskip

The reader awaits for us to introduce the notion of singular inner function in $\Omega$. This of course can and need be done if we want to have inner/outer factorization of functions of $N(\Omega)$. One can introduce singular inner functions in $\Omega$ by integrating Martin functions over  Martin boundary. But let us recall the reader, that our functions will be also entire functions. This extra assumption easily implies that the singular inner function can be only of a very simple type, it can only involve point masses in the points of Martin boundary that lie over infinity. In our case we will have $E\subset \R_-$, and so (\cite{Be}) there will be only one Martin function (up to a positive multiplicative constant) at infinity. So all possible singular inner function involved in the factorization of functions from $B_E(h)$ can be only  of the following form $e^{a(M+i\wt M)}, \,a\in \R$. Now we can state the following factorization theorem.

\begin{thm}[inner/outer factorization of entire functions of bounded characteristic in $\Omega$]
\label{fact}
Let $f\in B_E(h)$. Then
$$
f= B_\Omega \cdot f_{out}\cdot e^{a(M+i\wt M)},
$$
where $h\le a$.
\end{thm}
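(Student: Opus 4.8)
The plan is to transplant the classical Nevanlinna factorization into $\Omega$ and then exploit that $f$ is \emph{entire}; the only substantial analytic point is the final inequality $h\le a$. Since $f\in N(\Omega)$, the subharmonic function $\log|f|$ has a harmonic majorant in $\Omega$; let $v$ be its least harmonic majorant. The Riesz mass of $\log|f|$ is the divisor $\sum_k\delta_{z_k}$ of the zeros of $f$ in $\Omega$ (with multiplicities), so $\log|f|=v-\sum_k G(\cdot,z_k)$, and finiteness of this potential is exactly the Blaschke condition $\sum_k G(z_0,z_k)<\infty$, valid because $\log|f|$ has a harmonic majorant. Hence $B_\Omega:=\prod_k\Phi_{z_k}$ converges, $\log|B_\Omega|=-\sum_kG(\cdot,z_k)$, and $F:=f/B_\Omega$ is a zero-free (multiple-valued, character-automorphic) holomorphic function on $\Omega$ with $\log|F|=v$.

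Next I split $v$ into outer and singular parts. Standard Nevanlinna theory in a Dirichlet-regular Denjoy domain gives that $v$ is a difference of two positive harmonic functions and that its nontangential boundary values agree $\omega$-a.e.\ with $\log|f^*|$; thus $v=u+w$ with $u(z)=\int_E\log|f^*(t)|\,d\omega(t,z)$ (so $f_{out}=e^{u+i\wt u}$) and $w$ harmonic, $\omega$-a.e.\ zero on $E$, with representing measure on the Martin boundary singular with respect to $\omega$. This is the inner/outer/singular factorization of a general element of $N(\Omega)$; now use that $f$ is entire. Near a finite $t_0\in E$ write $f(z)=(z-t_0)^m h(z)$ with $h$ analytic and zero-free on a disc $D(t_0,r)$, so that no $z_k$ lies there: $(z-t_0)^m$ is outer in $\Omega$ (its boundary data $m\log|t-t_0|$ is locally $\omega$-integrable), $\log|h|$ is bounded near $t_0$, and, no $z_k$ accumulating at $t_0$, $\log|B_\Omega|\to0$ as $z\to t_0$. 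Comparing with the corresponding decomposition of $u$ near $t_0$, one finds $w$ bounded near $t_0$ in $\Omega$; hence its singular measure charges no point of the Martin boundary over a finite point of $E$. Since $E\subset\R_-$, the fiber over $\infty$ is a single minimal point with Martin function $M$ (\cite{Be}), so $w=aM$ for some $a\in\R$. Matching moduli, $|f/(B_\Omega f_{out}e^{a(M+i\wt M)})|\equiv1$; a modulus-one holomorphic function on $\D$ (pass to the cover) is a unimodular constant, which may be absorbed, yielding $f=B_\Omega\,f_{out}\,e^{a(M+i\wt M)}$.

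It remains to prove $h\le a$. From $\log|f|=aM+u+\log|B_\Omega|$ and $\log|B_\Omega|\le0$,
\[
\frac{\log|f(z)|}{M(z)}\le a+\frac{u(z)}{M(z)}\le a+\frac{\wt u(z)}{M(z)},\qquad \wt u(z):=\int_E\log^+|f^*(t)|\,d\omega(t,z)\ge0.
\]
Applying the defining estimate of $B_E(h)$ as $z\in\Omega$ approaches boundary points of $E$ of large modulus (there $M\to0$) forces $|f|\le1$ on $E\cap\{|t|>R\}$ for some $R$; so $\log^+|f^*|$ is supported on $K:=E\cap\overline{D(0,R)}$, bounded there since $f$ is continuous, whence $\wt u(z)\le C\,\omega(K,z)$. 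Thus the theorem reduces to
\[
\lim_{z\to\infty,\ z\in\Omega}\frac{\omega(K,z)}{M(z)}=0.
\]

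This last limit is the heart of the matter and the step I expect to be the main obstacle. The function $\omega(K,\cdot)$ is a \emph{bounded} positive harmonic function vanishing on $E\setminus K$, so its Martin representing measure puts no mass at the minimal point $\infty$; the task is to upgrade this to $\omega(K,\cdot)=o(M)$ along \emph{every} Euclidean approach to $\infty$, not merely the minimal-fine ones. I would obtain it from the boundary Harnack principle applied to $\omega(K,\cdot)$ and $M$ on $\Omega\setminus\overline{D(0,R')}$ (both vanish on $E$ there), together with boundedness of $\omega(K,\cdot)$ against the singular function $M$ at $\infty$; in concrete cases (e.g.\ the slit plane) one checks that $\omega(K,z)$ even carries an extra factor $|z|^{-1}$ relative to $M(z)$. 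Granting this, $\limsup_{z\to\infty}\log|f(z)|/M(z)\le a$, i.e.\ $h\le a$. The inequality is generally strict, since $\log|B_\Omega|+u$ can be very negative along sequences $z\to\infty$ --- precisely the discrepancy between the mean type of $f$ in $\Omega$ and that of its lift to $\D$ noted in the abstract. All remaining steps are the standard Nevanlinna machinery for regular Denjoy domains together with the (routine, once set up) fact that analyticity of $f$ across the finite part of $E$ leaves only a point mass at $\infty$ in the singular factor.
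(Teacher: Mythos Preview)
Your outline is sound, and the factorization half matches the paper almost verbatim: lift to the cover, factorize there, note that entirety of $f$ kills any singular mass over finite boundary points, and since $E\subset\R_-$ the Martin fiber over $\infty$ is one-dimensional, yielding $e^{a(M+i\wt M)}$.

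Your route to $h\le a$ diverges from the paper's in an interesting way. You feed the hypothesis $\limsup\log|f|/M\le h<\infty$ back into the boundary data: since $M\to 0$ on $E$ and $f$ is continuous there, the bound $\log|f(z)|<(h+\varepsilon)M(z)$ for $|z|>R$ forces $|f|\le 1$ on $E\cap\{|t|>R\}$, so $\log^+|f^*|$ is bounded with compact support $K$, and everything reduces to $\omega(K,z)/M(z)\to 0$. The paper does \emph{not} make this reduction; it proves the stronger unconditional statement $\int_E\phi\,d\omega(\cdot,x)=o(M(x))$ for \emph{every} $\phi\in L^1(d\omega)$ (Theorem~\ref{main12}). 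Your shortcut yields a cleaner target; the paper's version is more general and is what is reused for the companion discussion of $h=a$.

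The genuine gap is your final step. You invoke a ``boundary Harnack principle'' for $\omega(K,\cdot)/M\to 0$, but in an arbitrary Dirichlet-regular Denjoy domain (no thickness, no corkscrew) this is not a theorem one can quote---it is precisely what Section~\ref{Mouter} supplies by hand. The mechanism there is the comparison $\omega_a(x)/\omega_a(0)\le\omega_b(x)/\omega_b(0)$ for $b<a<0$, $x>0$ (Lemma~\ref{twopo}), proved by a bare-hands maximum-principle and path-connecting argument in $\Omega$; this gives $\omega([b,a],x)/\omega([b,a],0)\le M(x)$ (Lemma~\ref{twop1}) and hence the uniform Radon--Nikodym bound $d\omega(\cdot,x)/d\omega(\cdot,0)\le M(x)$ on $E$. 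That inequality is the actual substance behind the limit you need; without an analogue of Lemmas~\ref{twopo}--\ref{twop1} your argument is incomplete. (Note too that the paper carries this out only along $x\to+\infty$ on $\R_+$; upgrading to arbitrary $z\to\infty$ in $\Omega$, as Theorem~\ref{main1} asserts, is a separate issue that neither your boundary-Harnack sketch nor Theorem~\ref{main12} settles explicitly.)
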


Notice that our resoning preceding the statement of this theorem has already almost proved it, and also proved the following correpondence between the factorization of the lifted function $f\circ \fz(\z)$ and the factorization of the original function $f\in B_E(h)$: if $F:= f\circ \fz$, and $F=B\cdot F_{out} \cdot S$ is its factorization to 
a Blaschke product, an outer part, and a singular inner part, then
$$
B = B_\Omega\circ \fz,\, F_{out}= f_{out} \circ \fz,\, S= e^{a(M+i\wt M)}\circ \fz.
$$
But the inequality 
\begin{equation}
\label{ah}
h\le a
\end{equation} 
has not been proved. This ``small" claim will be one of the main results of the present paper. We also indicate cases when $a=h$.

Let us explain the difficulty behind this seemingly trivial statement. For this purpose it is more convenient to have universal covering $\pi:\C_+\rightarrow \Omega$, $\pi(\infty)=\infty$. Then given $f\in B_E(h)$ and $F:=f\circ \pi$ we can factorize $F$, $F= B\cdot F_{out} \cdot S$. Here $S$ is a singular inner function in $\C_+$, in particular
\begin{equation}
\label{PC}
u(z)= Ay + \int_{-\infty}^\infty \frac{y}{(x-t)^2+ y^2} d\sigma_s(t)\,,
\end{equation}
where $\sigma_s$ is a singular measure on $\R$. The convenience of $\C_+$ in comparison with $\D$ is explained by the special role of $\infty$ (of course we could have had the same role assigned to point $1\in\partial \D$). 

Recall that the symmetric Martin function $M(z)$ is defined up to a positive multiplier.

{\bf Question 1.} Is it possible to normalize $M(z)$ such that
 $A$ in \eqref{PC} is equal to $a$ in Theorem \ref{fact}?

Clearly, measure $A\delta_\infty  + d\sigma_s$ appeared just from lifting of $a\delta_\infty^E$. By this we mean that we have $a\delta_\infty^E$ measure at (unique) infinity point of the Martin boundary and it defines  our singular inner function $e^{a(M+i\wt M)}$ in $\Omega$. Then we lift this singular inner function $e^{a(M+i\wt M)}$  into $\C_+$ by $S:=e^{a(M+i\wt M)}\circ \pi$, and then we consider the Herglotz representation of $\log\frac1{|S|}$ by singular (it is necessarily singular by the principle of correspondence of harmonic measures under lifting) measure $A\delta_\infty + d\sigma_s$.  The same can be done of course by lifting into the disc $\D$ as follows $S:=e^{a(M+i\wt M)}\circ \fz$.

{\bf Definition.}  Singular measure $A\delta_\infty+ d\sigma_s$ (or corresponding singular measure on $\partial\D$) will be called the {\it lifting} of delta measure at infinity under lifting $\pi:\C_+\rightarrow \Omega$  (correspondingly, under lifting $\fz: \D\rightarrow \Omega$, we have $A\delta_1+ d\sigma_s$).

Here is a related very natural question.

{\bf Question 2.} Can the lifting of a point measure at infinity be purely continuous singular measure?

In fact, we will show that $A$ may not have anyting to do with $a$, for example, it can be that $A=0, a\neq 0$. Also we will show that the lifting of the point mass can be 1) purely point, 2) purely singular continuous measure, 3) but never the mixture of two. The answer depends on how complicated $E$ is  at infinity.

We can now explain the reason why the inequality $h\le a$ in Theorem \ref{fact} is far from being trivial.
By definition of $h$ we have
$$
h=\limsup_{z\to\infty}\frac{|f_{out}||B_\Omega|\log|e^{a(M+i\wt M)}|}{M(z)}
 \le \lim_{z\to\infty} \frac{\log|e^{a(M+i\wt M)}|}{M(z)}+
$$
$$
 \limsup_{z\to\infty}\frac{\log|f_{out}(z)|}{M(z)} +\limsup_{z\to\infty}\frac{\log|B_\Omega(z)|}{M(z)}\le
$$
$$
a +\limsup_{z\to\infty}\frac{\log|f_{out}(z)|}{M(z)} +\limsup_{z\to\infty}\frac{\log|B_\Omega(z)|}{M(z)}.
$$

To prove $h\le a$  one should  prove the theorem:

\begin{thm} 
\label{main1}
Let $E\subset \R_-$. Then 
\begin{equation}
\label{out}
\lim_{z\rightarrow\infty, z\in \Omega} \frac{\log |f_{out}(z)|}{M(z)} = 0\,.
\end{equation}
\end{thm}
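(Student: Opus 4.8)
The plan is to argue directly in $\Omega$, starting from the formula $\log|f_{out}(z)|=u(z)=\int_E\log|f(t)|\,d\omega_\Omega(t,z)$, and to extract from the definition of $B_E(h)$ the decisive pointwise information that $\log|f|$ is nonpositive on $E$ near infinity. Fix $\e>0$; by definition of $B_E(h)$ there is $R_\e$ with $\log|f(z)|\le(h+\e)M(z)$ for every $z\in\Omega$ with $|z|>R_\e$. Since $f$ is entire it is continuous up to $E$, and since every boundary point (including $\infty$) is Dirichlet regular, $M$ extends continuously to $\overline\Omega$ with $M\rest E=0$; letting $z\to t$ for a fixed $t\in E$ with $|t|>R_\e$ gives $\log|f(t)|\le 0$. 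Hence, once $R$ is large enough, $\log|f|\le 0$ on $E^R:=E\cap(-\infty,-R)$; fix such an $R$.

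Write $u(z)=\int_{E_R}\log|f|\,d\omega_\Omega(\cdot,z)+\int_{E^R}\log|f|\,d\omega_\Omega(\cdot,z)$ with $E_R:=E\cap[-R,0]$ compact. The second integral is $\le 0$, so it only helps the upper estimate, while the first integral is at most $C^+\omega_\Omega(E_R,z)$ with $C^+:=\sup_{E_R}\log^+|f|<\infty$. For the lower estimate, $u(z)\ge-\int_E\log^-|f|\,d\omega_\Omega(\cdot,z)$, and $\log^-|f|\in L^1(d\omega_\Omega)$ since $f\in N(\Omega)$. Thus Theorem \ref{main1} reduces to the two assertions, as $z\to\infty$ in $\Omega$:
$$\text{(i)}\quad \omega_\Omega(E_R,z)=o(M(z)),\qquad \text{(ii)}\quad \int_E\log^-|f|\,d\omega_\Omega(\cdot,z)=o(M(z)).$$

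Both are special cases of what I expect to be the technical heart of the proof: \emph{a positive harmonic function $g$ on $\Omega$ whose Martin representing measure gives no mass to the boundary point $\infty$ satisfies $g(z)=o(M(z))$ as $z\to\infty$}. The mechanism is that $M$ is the minimal positive harmonic function with pole at $\infty$ (the relevant cone being one-dimensional, as recalled in the introduction), so $M(z)\to+\infty$ as $z$ escapes to infinity through the bulk, while $g$, having compactly supported representing data and $\infty$ being regular, tends to $0$ there; and near $E$, where $g$ and $M$ both vanish, their ratio is controlled by a boundary Harnack estimate for $\Omega$ along the part of $E$ lying over infinity, the minimality of $M$ improving the resulting $O(1)$ bound to $o(1)$. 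Statement (i) is immediate, since $\omega_\Omega(E_R,\cdot)$ is bounded with representing measure carried by the compact set $E_R$. For (ii) split further: $\int_E\log^-|f|\,d\omega_\Omega=\int_{E_{R'}}\log^-|f|\,d\omega_\Omega+\int_{E^{R'}}\log^-|f|\,d\omega_\Omega$. The first term has representing data supported on the compact set $E_{R'}$ — the finitely many boundary zeros of $f$ in $E_{R'}$ contribute, additionally, logarithmic potentials whose representing measures are again compactly supported — hence it is $o(M)$ by the principle above. The second term is a positive harmonic function whose value at a fixed base point $z_0$ is smaller than any prescribed $\e'$ once $R'$ is large (because $\log^-|f|\in L^1(d\omega_\Omega)$), so its $M$-component is at most $\e'/M(z_0)$ and, the remainder being $o(M)$ at $\infty$, its upper limit relative to $M$ is $\le\e'/M(z_0)$; letting $R'\to\infty$ gives (ii).

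The main obstacle is exactly the principle of the previous paragraph. The Fatou--Na\"im--Doob theorem yields $g/M\to 0$ only along the minimal fine filter at $\infty$, whereas here the unrestricted limit $z\to\infty$ in $\Omega$ is needed; closing this gap is where the geometry of the Denjoy domain and the regularity of $\infty$ must be used, through a boundary Harnack principle for $\Omega$ near the portion of $E$ close to infinity. It should also be noted that along a sequence $z_n\to\infty$ approaching a boundary zero of $f$ on $E$ one has $u(z_n)/M(z_n)\to-\infty$, so I read \eqref{out} as a statement about the upper limit, $\limsup_{z\to\infty}\frac{\log|f_{out}(z)|}{M(z)}=0$ — that is, $f_{out}$ has vanishing mean type relative to $M$ — which is precisely what the chain of inequalities preceding Theorem \ref{main1} requires, since $\log|B_\Omega|\le 0$ already renders the Blaschke term nonpositive.
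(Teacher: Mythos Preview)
Your reduction is correct in outline: writing $u(z)=\int_E\log|f|\,d\omega_\Omega(\cdot,z)$ and splitting into a compact part and a tail, everything comes down to the statement you call the ``principle'': any positive harmonic function on $\Omega$ with no Martin mass at $\infty$ is $o(M)$. But you have not proved this principle, and you say so yourself in your final paragraph. That is not a technicality to be patched; it \emph{is} the theorem. Invoking Fatou--Na\"im--Doob, boundary Harnack, and minimality of $M$ in the same breath is a list of relevant ingredients, not an argument: you give no indication of which boundary Harnack inequality holds in an arbitrary Denjoy domain $\C\setminus E$ with $E\subset\R_-$, over which pieces of the boundary, with what constants, or how minimality upgrades an $O(1)$ bound to $o(1)$ once you already have $g/M$ bounded. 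Your argument for (ii) is also circular as written: you decompose the tail $g_{R'}$ into $c_{R'}M$ plus a ``remainder with no mass at $\infty$'', and then apply the unproved principle to the remainder.

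The paper does not go through Martin theory at all. It proves a concrete comparison lemma by bare hands: for $b<a$ in $E$ and $\omega_a(z):=\omega(E\cap(-\infty,a],z)$, one has $\dfrac{\omega_a(x)}{\omega_a(0)}\le\dfrac{\omega_b(x)}{\omega_b(0)}$ for every $x>0$. The proof is a maximum-principle argument exploiting the one-dimensional geometry: assume the inequality fails, look at the sublevel set of the difference, connect points by a path symmetric under conjugation, and trap a contradiction on the real axis. Letting $b\to-\infty$ gives $\omega_a(x)/\omega_a(0)\le M(x)$, and from this one reads off the uniform bound $\dfrac{d\omega(\cdot,x)}{d\omega(\cdot,0)}(t)\le M(x)$ for all $t\in E$, $x>0$. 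With that in hand, the tail $\int_{-\infty}^N|\phi|\,d\omega(\cdot,x)\le M(x)\int_{-\infty}^N|\phi|\,d\omega(\cdot,0)<\e M(x)$ is immediate, and the compact part is bounded, hence $o(M)$. This gives $\int_E\phi\,d\omega(\cdot,x)=o(M(x))$ for \emph{every} $\phi\in L^1(d\omega)$, so your preliminary step (using $f\in B_E(h)$ to force $\log|f|\le0$ on $E$ near $\infty$) is not needed.

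Two further remarks. First, the paper proves the statement only along the positive real axis, $x\to+\infty$; this is what the comparison lemma delivers and what the application to $h\le a$ actually uses. You are attempting the unrestricted limit $z\to\infty$ in $\Omega$, which is harder and not what the paper establishes. Second, your aside about ``logarithmic potentials'' at boundary zeros of $f$ is a distraction: $\log|f|\in L^1(d\omega)$ already absorbs those singularities, and $u(z)$ is finite throughout $\Omega$.
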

Of course, the  inequality
$$
\limsup_{z\rightarrow\infty, z\in \Omega} \frac{\log |B_\Omega(z)|}{M(z)} \le 0\,,
$$
 is obvious because $|B_\Omega|\le1$.
In fact, we believe that in a special sense (but not literally)
\begin{equation}
\label{B0}
\lim_{z\rightarrow\infty, z\in \Omega} \frac{\log |B_{\Omega}(z)|}{M(z)} = 0\,.
\end{equation}
We will prove \eqref{B0} in some special cases.

\begin{thm} 
\label{main2}
Let $E\subset \R_-$. Then 
\begin{equation}
\label{out}
\lim_{z\rightarrow\infty, z\in \Omega} \frac{\log |B_\Omega(z)|}{M(z)} = 0\,,
\end{equation}
if all zeros of $B_\Omega$ lie in $\R_-\setminus E$.
\end{thm}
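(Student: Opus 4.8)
\medskip
\noindent\emph{Plan of proof.} By the definition of $\Phi_{z_k}$ and of $B_\Omega$,
$$
\log|B_\Omega(z)|=-\sum_{k=1}^\infty G(z,z_k)=:-u(z),\qquad \sum_{k=1}^\infty G(z_*,z_k)<\infty
$$
for a fixed $z_*\in\Omega$; thus $u$ is a Green potential of $\Omega$, and by hypothesis its Riesz mass $\mu=\sum_k\delta_{z_k}$ is carried by $\R_-$. Since $|B_\Omega|\le1$ already gives $\limsup_{z\to\infty}\log|B_\Omega(z)|/M(z)\le 0$, the whole content of the statement is the one-sided bound $u(z)=o(M(z))$ as $z\to\infty$ in $\Omega$.

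Fix $\e>0$ and split the zeros into a bounded part and a remote part, $u=u'_R+u''_R$ with $u'_R=\sum_{|z_k|\le R}G(\cdot,z_k)$ and $u''_R=\sum_{|z_k|>R}G(\cdot,z_k)$. The function $u'_R$ is positive and harmonic on $\{|z|>R\}\cap\Omega$, vanishes on $E$ there, tends to $0$ at the (regular, by assumption) boundary point $\infty$, and is bounded there — by the maximum principle together with a Harnack estimate from $z_*$, using $\sum_{|z_k|\le R}G(z_*,z_k)<\infty$. For such a function $u'_R(z)/M(z)\to 0$ as $z\to\infty$: along nontangential approach this is clear because $M\to\infty$ while $u'_R$ stays bounded; along a tangential approach $z\to\xi$ with $\xi\in E$, $|\xi|$ large, both functions vanish and the scale-invariant boundary Harnack principle for Denjoy domains (applied at $\infty$ after the inversion $z\mapsto 1/z$, which turns $\infty$ into a finite regular boundary point) bounds $u'_R/M$ near $\xi$ by its value at a corkscrew point, which lies in a nontangential cone; and the minimality of the Martin function $M$ at $\infty$ forces the Martin representation of a bounded positive harmonic function vanishing on $E$ near $\infty$ to carry no mass at $\infty$, so that the limit is honest. (This step runs parallel to the proof of Theorem~\ref{main1} for outer functions.)

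It remains to estimate $u''_R$. By the Blaschke condition one may choose $R=R(\e)$ so large that $u''_R(z_*)=\sum_{|z_k|>R}G(z_*,z_k)<\e$, and the claim is that then $u''_R(z)\le C\,\e\,M(z)$ as $z\to\infty$, with $C$ absolute; letting $\e\to 0$ finishes the proof. Here the reality of the zeros is used in an essential way. Since all poles of $u''_R$ lie on $\R_-$, the potential $u''_R$ is harmonic on $\C\setminus\R_-$, where it is a positive harmonic function vanishing on $E$; and since each remote pole $z_k$ sits strictly inside a (bounded, once $R$ is large) gap of $E$, any sequence $z_n\to\infty$ along which $u''_R(z_n)$ could be comparable to $M(z_n)$ is forced to approach $\R_-$, hence to approach $E$, where $M$ is itself small. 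The ratio $u''_R/M$ is then pinned down by the boundary Harnack principle against the reference value $u''_R(z_*)<\e$, which after a chaining argument over the dyadic scales $|z|\sim 2^j$ yields $u''_R\le C\e M$.

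I expect this third step to be the main obstacle: obtaining the domination $u''_R\le C\e M$ uniformly in the (real, but otherwise arbitrary) distribution of the remote zeros, and in particular ruling out a ``tangential escape'' of the kernels $G(\cdot,z_k)$ toward $\infty$. In the general situation the poles need not lie among the components of $E$, a tangential sequence $z_n\to\infty$ can shadow the poles while staying uniformly away from $\partial\Omega$, and then both the argument and the conclusion fail; this is precisely why \eqref{B0} is left as a conjecture while Theorem~\ref{main2} requires the zeros to be real.
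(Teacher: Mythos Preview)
Your overall split $u=u'_R+u''_R$ is exactly the decomposition the paper uses, but the way the two pieces are handled differs, and your treatment of the tail $u''_R$ is where the genuine gap lies.

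\medskip
\textbf{The tail $u''_R$.} You acknowledge this is the obstacle, and indeed your sketch (boundary Harnack plus ``chaining over dyadic scales'') does not close. The difficulty is that $u''_R$ is \emph{not} harmonic in any neighbourhood of $\infty$ in $\Omega$: the remote poles $z_k$ accumulate at $\infty$, so you cannot apply a Martin-boundary argument to $u''_R$ as a single positive harmonic function. The paper's device is instead a pointwise monotonicity lemma (Lemma~\ref{2G}): for $b<a$ in $\R_-\cap\Omega$ and every $x>0$,
\[
\frac{G(a,x)}{G(a,0)}\ \le\ \frac{G(b,x)}{G(b,0)}.
\]
Letting $b\to-\infty$ along the zeros gives $G(c_k,x)\le G(c_k,0)\,M(x)$ for each $k$ and every $x>0$, so
\[
u''_R(x)=\sum_{k>N}G(c_k,x)\ \le\ M(x)\sum_{k>N}G(c_k,0)\ <\ \e\,M(x)
\]
with constant $C=1$, no chaining needed. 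This monotonicity is proved by a bare-hands maximum-principle argument exploiting the symmetry $z\mapsto\bar z$ (and alternatively via Abelian integrals in Section~\ref{abint}); it is the real content of the theorem and replaces your appeal to boundary Harnack entirely.

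\medskip
\textbf{The finite part $u'_R$.} You assert that $u'_R$ is bounded near $\infty$ by invoking regularity of $\infty$. The paper is explicit that this requires an extra hypothesis: either Dirichlet regularity of $\infty$, or the weaker condition \eqref{MG} that $\sup_{x>0}G(c,x)<\infty$ for each fixed $c$. With \eqref{MG} in hand, each $G(c_k,x)$ is bounded while $M(x)\to\infty$, so $u'_R(x)/M(x)\to 0$ trivially. Without such a hypothesis the statement can fail (the paper remarks that if $G(c,x)\asymp\log x\asymp M(x)$ the conclusion is plainly false), so this is not something your argument can take for granted.

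\medskip
\textbf{Scope of the limit.} Note finally that the paper's actual proof (Theorem~\ref{MBth}, Corollary~\ref{widom}) establishes only $\sum_k G(c_k,x)=o(M(x))$ for $x\to+\infty$ along the positive real axis, not the unrestricted limit $z\to\infty$ in $\Omega$ that the statement of Theorem~\ref{main2} suggests. Your proposal aims at the stronger version; the monotonicity lemma as stated only controls $x\in\R_+$.
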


In this case we have equality $a=h$ just because
$$
a=\lim_{z\to\infty} \frac{\log|e^{a(M+i\wt M)}|}{M(z)}\le \limsup_{z\to\infty}\frac{|f_{out}||B_\Omega|\log|e^{a(M+i\wt M)}|}{M(z)}
-
$$
$$
- \liminf_{z\to\infty}\frac{\log|f_{out}(z)|}{M(z)}+\liminf_{z\to\infty}\frac{\log|B_\Omega(z)|}{M(z)}=h.
$$

\bigskip

Now we want to look at the relationship between numbers $a$ and $A$. Number $A$ can clearly be calculated as follows using the lifting of $f$ into half-plane $\C_+$, $F(\z):=f \circ \pi (\z)$,
$$
A=\limsup_{\z\in \C_+, \z\to\infty}\frac{\log |F(\z)|}{|\z|}.
$$

\begin{lem}
\label{easy1}
Let $E\subset \R_-$. Function $M\circ \pi$ is a Poisson transform of pure point mass  (lifting of delta measure at infinity is pure point) if and only if
\begin{equation}
\label{Muu}
\lim_{x\to+\infty} \frac{M(x)}{U(x)} >0,
\end{equation}
where $V(z)+iU(z)$ is the branch of $\pi^{-1}\rest\C_+$.
\end{lem}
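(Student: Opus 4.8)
The plan is to reduce the lemma to two ingredients: an explicit description of $U$ on the positive real axis, and the Herglotz representation of the lift $M\circ\pi$.

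First I would normalize the covering $\pi\colon\C_+\to\Omega$. Since $E\subset\R_-$ (so $(0,\infty)\subset\Omega$) and both $\Omega$ and $E$ are symmetric about $\R$, while $\pi(\infty)=\infty$, one may choose $\pi$ to intertwine the reflection $w\mapsto-\bar w$ of $\C_+$ with complex conjugation on $\Omega$; the only residual freedom is a scaling $w\mapsto\lambda w$, $\lambda>0$, which is harmless. Then $g(\eta):=\pi(i\eta)$ is real for $\eta>0$; the curve $\{g(\eta):\eta>0\}$ is an interval of $\R\cap\Omega$ that reaches $\infty$ (because $\pi(i\eta)\to\infty$ as $\eta\to\infty$) and whose other endpoint $g(0^+)$ is a boundary point of $\Omega$, so $g$ is an increasing homeomorphism of $(0,\infty)$ onto $(\beta,\infty)$ with $\beta:=\max(E\cap\R)\le0$. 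As $(\beta,\infty)$ is interior to $\Omega$, the branch $V+iU$ of $\pi^{-1}$ on the upper half-plane $\subset\Omega$ continues holomorphically across $(\beta,\infty)$, and there $V\equiv0$ and $U(x)=g^{-1}(x)$. So $U(x)=g^{-1}(x)$ for all large $x$ --- this is the only property of $U$ I will use.

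Next, the computational core: I claim
\[
\lim_{x\to+\infty}\frac{M(x)}{U(x)}=A,
\]
with $A$ the coefficient in \eqref{PC}. Indeed $M\circ\pi$ is a positive harmonic function on $\C_+$, so $(M\circ\pi)(s+i\eta)=A\eta+\int\frac{\eta\,d\sigma_s(t)}{(s-t)^2+\eta^2}$, where $\sigma_s$ is singular (as the paper notes, by the correspondence of harmonic measures under lifting, $M\circ\pi$ has vanishing nontangential boundary values a.e.). Putting $s=0$ and dividing by $\eta$ gives $A+\int\frac{d\sigma_s(t)}{t^2+\eta^2}$, and the integral tends to $0$ as $\eta\to\infty$ by dominated convergence, using $\int\frac{d\sigma_s(t)}{1+t^2}<\infty$. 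Since $(M\circ\pi)(i\eta)=M(g(\eta))$ and $\eta=U(g(\eta))$, substituting $x=g(\eta)\to\infty$ gives the identity; hence \eqref{Muu} is equivalent to $A>0$.

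Finally I must show that $A>0$ is equivalent to the lift $\mu:=A\delta_\infty+\sigma_s$ being pure point. As $M$ is single valued on $\Omega$, $M\circ\pi$ is automorphic under the Fuchsian group $\Gamma$ of the covering, so $\mu$ is $\Gamma$-quasi-invariant; and minimality of $M$ (uniqueness of the Martin function at $\infty$) together with the one-dimensionality of the cone of positive harmonic functions vanishing on $E$ makes $\mu$ $\Gamma$-ergodic --- for $\Gamma$-invariant Borel $S$, $\int_S P(\cdot,t)\,d\mu$ descends to a positive harmonic minorant of $M$, hence a multiple of $M$, so by uniqueness of the representing measure $\mathbf 1_S\,d\mu=c\,d\mu$, forcing $\mu(S)\in\{0,\|\mu\|\}$. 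Also $\mu$ is carried by the part of $\partial\C_+$ over $\infty$, since over the (regular) points of $E$ one has $M\circ\pi\to0$ continuously. If $A>0$, then $\infty$ is an atom, so the countable orbit $\Gamma\infty$ has $\mu(\Gamma\infty)>0$, hence $\mu(\Gamma\infty)=\|\mu\|$ and $\mu$ is purely atomic. Conversely, if $\mu$ is purely atomic, ergodicity forces it to be carried by a single orbit $\Gamma w_0$ with $\pi(w_0)=\infty$; writing $M\circ\pi$ as the corresponding Poincar\'e-type series $\mu(\{w_0\})\sum_{[\gamma]\in\Gamma/\operatorname{Stab}(w_0)}P(\gamma^{-1}\cdot,w_0)$ and using that it is finite forces $w_0$ to be a parabolic fixed point --- i.e.\ $\infty$ to be a cusp of $\Omega$ --- whereupon every boundary point of $\C_+$ over the cusp $\infty$, in particular $\infty$ itself, lies in $\Gamma w_0$, so $\mu(\{\infty\})=A>0$. (If $E$ is bounded, $\{\infty\}$ is an isolated component of $\partial\Omega$, automatically a parabolic cusp, and this last step is immediate; the general case is precisely where ``how complicated $E$ is at infinity'' enters.) This last implication --- identifying the orbit supporting an atomic lift as the orbit of the cusp over $\infty$, so that the atom at $\infty$ is genuinely present --- is the step I expect to be the main obstacle; everything else is either soft or a direct computation. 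Combining it with the identity above proves the lemma.
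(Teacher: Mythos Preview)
Your approach matches the paper's on both main points. The identification of $\lim_{x\to+\infty}M(x)/U(x)$ with the mass $A$ at $\infty$, via the change of variable $x=\pi(i\eta)$, is precisely the paper's computation (the paper is terser, simply asserting that under the chosen branch ``the right half-axis is mapped into $\{i\eta,\ \eta>0\}$''). Your ergodicity argument for ``$A>0\Rightarrow$ pure point'' is a repackaging of the paper's: there one restricts the representing measure to the $\Gamma$-orbit of an atom, observes that its Poisson extension is $\Gamma$-invariant and hence descends to a positive harmonic $P\le M$ on $\Omega$, and invokes minimality of the Martin function to get $P=cM$, whence the full measure was already pure point. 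Your formulation via $\Gamma$-invariant Borel sets is the same mechanism.

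Where you go further than the paper is the converse ``pure point $\Rightarrow A>0$''. The paper does \emph{not} prove this direction: its argument establishes only ``some atom $\Rightarrow$ pure point'' together with ``atom at $\infty\Leftrightarrow$\eqref{Muu}'', leaving open in principle a pure point lift supported on an orbit not containing $\infty$. Your sketch---single-orbit support by ergodicity, then a convergent orbital Poisson series forces the base point $w_0$ to be parabolic, hence the supporting orbit is $\Gamma\infty$---is the right shape, but both key steps (that a finite $\Gamma$-automorphic Poisson series over a limit point forces parabolicity, and that the parabolic class over the unique Martin point at $\infty$ is exactly $\Gamma\infty$) are asserted rather than proved, and neither is entirely routine in the infinitely-connected setting. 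You rightly flag this as the main obstacle; the paper simply does not engage with it.
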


\begin{proof}
 Function $M\circ \pi$ is a positive harmonic function in $\C_+$. By the principle of preservation of harmonic measure, its a. e. non-tangential limits are zero. So it is given by the Poisson integral of positive singular measure $b\delta_\infty + \mu_s$. It is well-known that $b= \lim_{\eta\to+\infty}\frac{M\circ\pi(i\eta)}{y\eta}$. We change variable in this equality: $\xi+i\eta=\z \pi^{-1} (x+iy)=\pi^{-1}(z)=V(z)+iU(z)$. Here we put $V(z)+iU(z)$ to be  the branch of $\pi^{-1}\rest\C_+$ mapping $\C_+$ onto the fundamental domain of a corresponding Fuchsian group $\Gamma$ in $\C_+$ such that $\C_+/\Gamma= \Omega$. In particular,
the right half-axis is mapped into $\{i\eta, \eta>0\}$. Then we see that $b>0$ if and only if \eqref{Muu} is satisfied.

Now let us prove that if $M\circ\pi$ has a nontrivial point mass in its measure, then this measure is pure point. If $x\in [-\infty,\infty]$ caries a point mass than every point of the orbit $orb(x)$ under the Fuchsian group $\Gamma$ carries point masses of $M\circ \pi$. Corresponding point measure is denoted by $\nu_s$, and if we build its Poisson extension $p(\z)$ it will be automatically $\Gamma$-invariant. This means that $p$ is a lifting, namely, 
$p(\z)=P\circ\pi(\z)$, where $P$ is a positive harmonic function in $\Omega$. Moreover, as $\nu_s$ is ``only a part" of singular measure of $M\circ \pi$, we get $P\circ \pi=p\le M\circ \pi$. So $P\le M$. But $M$ is Martin's function, so $P= c M$. But then $P\circ \pi= c M\circ \pi$. This means that the  measure of $M\circ \pi$ is proportional to the measure of $P\circ \pi$. This latter being pure point, we conclude that measure of $M\circ \pi$ must also be pure point.
\end{proof}

\bigskip

This simple lemma  ``answers" the question when $A\neq 0$ given that $a\neq 0$.

We can recall that for functions $F$ of bounded characteristic in $\C_+$, and for such functions this $\limsup_{\z\in \C_+, \z\to\infty}\frac{\log |F(\z)|}{|\z|}$ can be calculated via  the factorization of $F$ into outer, Blaschke product, and singular inner functions.  Namely, this $\limsup$ is exactly the mass  at infinity of the measure of the singular inner part of the factorization of $F$.

The singular inner part of $F$ was denoted by $S$ and we saw that $S=e^{a(M+i\wt M)}\circ \pi$. Hence,
$A\neq 0$ if and only if $a\cdot\limsup_{\z\to\infty}\frac{M\circ \pi(\z)}{|\z|}\neq 0$. Changing the variable $\z=\pi^{-1}(z)$, we conclude this happens if and only if \eqref{Muu} happens.

\subsection{Comb domains}\label{comb} Let $E$ be a union of a finite number of intervals,
$$
E=[b_0,a_0]\setminus\cup_{j=1}^g(a_j,b_j).
$$
Let us associate to these data the hyperelliptic Riemann surface
$$
\mathcal R=\{(z,s): s^2=\prod_{j=0}^g(z-a_j)(z-b_j)\}.
$$
One can visualize this surface as the two copies of the domain $\Omega=\bar \C\setminus E$ glued in an appropriate way along the set $E$. 

The Abelian integrals on this surface have a very explicit form. In particular, the Abelian integral of the third kind with logarithmic singularities at $(z_0,\pm s_0)$, $z_0\in \R\setminus [b_0,a_0]$, is given by
\begin{equation*}\label{abe}
H(z,z_0)=\int_{a_0}^z\frac{P_g(\lambda)}{\lambda-z_0}\frac{s_0 d\lambda}{s(\lambda)}
\end{equation*}
where $P_g(z)$ is a polynomial of degree $g$ such that $P_g(z_0)=1$. If we fix the remaining coefficients of $P_g$ by the conditions
\begin{equation}\label{periods}
\int_{a_j}^{b_j}\frac{P_g(\lambda)}{\lambda-z_0}\frac{s_0 d\lambda}{s(\lambda)}=0,\ 1\le j\le g,
\end{equation}
we obtain the standard representation for the Green function in $\Omega$
\begin{equation}\label{htog}
H(z,z_0)=G(z,z_0)+i\wt{G(z,z_0)}.
\end{equation}
Note that the conditions \eqref{periods} imply that $P_g$ has exactly one zero $\lambda_j$ in each gap $(a_j,b_j)$. Thus 
$$
H(z,z_0)=\int_{a_0}^z\frac{\prod_{j=1}^g(\lambda-\lambda_j)}{\prod_{j=1}^g(z_0-\lambda_j)}\frac{s(z_0)d\lambda}{(\lambda-z_0)s(\lambda)}
$$
 represents a Schwarz-Christoffel integral. That is, $\Theta(z):=iH(z,z_0)$  maps the upper half-plane onto a special polygon, a so-called comb domain,
 $$
 \Pi=\{u+iv: -\pi<u<0,\ v>0\}\setminus\{-\pi\omega_k+i v: 0<v\le h_k, 1\le k\le g\},
 $$
 where $\omega_k\not=\omega_j$ for $k\not=j$ and $h_k>0$.
 
 This construction one can widely generalize.
 
 \begin{thm}[\cite{EYu}]
 Let 
 $$
 \Pi=\{u+iv: \ -\pi<u<0,\ v\ge h(u)\},
 $$
 where $h$ is a non-negative upper semi-continuous function bounded from above and equal $0$ almost everywhere with respect to the Lebesgue measure on $[-\pi,0]$. For every such $\Pi$ the conformal mapping $\Theta:\C_+\to \Pi$,  normalized by
 $$
 \Theta (z_0)=\infty, \Theta (a_0)=i\sup_{u\to 0}{h(u)}, \ \Theta(b_0)=-\pi+i\sup_{u\to-\pi} h(u),
 $$
 is related with the Green function   $G(z,z_0)$ of some closed set $E$ by the formula $G(z,z_0)=\Im \Theta(z)$, $\Im z> 0$. 

If $h(u)$ does not vanish only on a countable set $\{-\pi\omega_k\}^\infty_{k\ge1}$ and $\lim_{k\to\infty} h(-\pi\omega_k)=0$, then $\Theta(z)$ can be extend by continuity on $\R$, that is, $E$ is Dirichlet regular.
 \end{thm}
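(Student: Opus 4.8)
The plan is to read the conformal map $\Theta$ as (the imaginary part of) a complexified Green function and to produce $E$ by Schwarz reflection across $\R$. First I would observe that $\Pi$ is a simply connected proper subdomain of the strip $\{-\pi<\Re w<0\}$ (its complement in $\bar\C$ stays connected when one removes vertical slits attached to the boundary), so a conformal map $\Theta:\C_+\to\Pi$ exists by the Riemann mapping theorem; the three-real-parameter automorphism group of $\C_+$ is then used to impose the three normalizing conditions, reading the points $i\sup_{u\to 0}h(u)$, $-\pi+i\sup_{u\to-\pi}h(u)$ and $i\infty$ as three distinguished prime ends of $\Pi$ (near the first two $\Pi$ lies between two accessible walls, and near $i\infty$ it is a half-strip, so these are honest prime ends). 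This pins down $\Theta$ uniquely.

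Next, set $u_0:=\Im\Theta$ on $\C_+$. Since every point of $\Pi$ has imaginary part $\ge h(u)\ge 0$, $u_0$ is a nonnegative harmonic function, hence $u_0>0$ in $\C_+$ by the minimum principle; and because $\Theta(z_0)=i\infty$ while near $i\infty$ the domain $\Pi$ is a half-strip of width $\pi$, one gets $u_0(z)=\log\frac1{|z-z_0|}+O(1)$ as $z\to z_0$, a logarithmic pole with the correct coefficient. Now let $E\subset\R$ be the complement in $\R$ of the relatively open set of points across which $u_0$ extends harmonically (equivalently, $\R\setminus E$ is mapped by $\Theta$ into the walls $\Re w\in\{-\pi,0\}$ or into one of the slits $\Re w=-\pi\omega_k$, while $E$ is mapped into the teeth $\Im w=0$). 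Across a point of $\R\setminus E$ the conjugate harmonic function $\Re\Theta$ is locally constant on the boundary, so by reflection $u_0(\bar z)=u_0(z)$ there; across $E$, $u_0$ has boundary value $0$. Hence $\tilde G$ defined by $\tilde G(z)=u_0(z)$ for $\Im z\ge 0$ and $\tilde G(z)=u_0(\bar z)$ for $\Im z<0$ is harmonic on $\C\setminus E$, positive, vanishes on $E$, is bounded near $\infty$, has the one logarithmic pole at $z_0$, and is symmetric, $\tilde G(\bar z)=\tilde G(z)$. A positive harmonic function with a positive logarithmic pole cannot exist on the complement of a polar set, so $E$ is non-polar; therefore $\C\setminus E$ has a Green function, and uniqueness forces $\tilde G=G(\cdot,z_0)$, i.e. $G(z,z_0)=\Im\Theta(z)$ for $\Im z>0$, with the stated symmetry. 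As a cross-check I would also note that truncating $h$ to finitely many slits of rational ordinates reduces to the explicit formula $G=\Im(iH)$ recalled above, and that $\Pi_n\to\Pi$ in the sense of kernel convergence gives $\Theta_n\to\Theta$ locally uniformly and $E_n\to E$, $G_n\to G$.

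For the regularity statement, assume $h$ vanishes off a countable set $\{-\pi\omega_k\}$ with $\lim_k h(-\pi\omega_k)=0$. By Carathéodory's theorem it is enough to show $\partial\Pi$ is locally connected; then $\Theta$ extends continuously to $\R\cup\{\infty\}$, so $G(\cdot,z_0)=\Im\Theta$ extends continuously to $\R$ with boundary value $0$ on $E$, and continuity of the Green function up to the boundary with vanishing trace is exactly Dirichlet regularity of every point of $E$. Local connectedness of $\partial\Pi$ is obvious away from accumulation points of the slits and away from $i\infty$ (a half-strip end is unproblematic). At an accumulation point $-\pi\omega_\ast$ of the $\omega_k$, the hypothesis $h(-\pi\omega_k)\to 0$ forces the heights of all slits with feet near $-\pi\omega_\ast$ to be small (any $\omega_{k_j}\to\omega_\ast$ has $k_j\to\infty$), so two boundary points within a small $\delta$ of $-\pi\omega_\ast$ can be joined inside $\partial\Pi$ by running down the relevant slits to the teeth and along the teeth, a connected set of small diameter.

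The main obstacle is the boundary analysis in the general first part, where $\Theta$ need not be continuous up to $\R$: making the definition of $E$ precise, verifying that the reflection giving $\tilde G$ is legitimate on the whole wall/slit preimage (not merely on analytic subarcs, and across slit tips, slit feet and accumulation points), and checking that $\tilde G$ vanishes on $E$ in the sense actually needed to identify it with the Green function (quasi-everywhere suffices, but one must control the fine boundary behavior of $G$ itself). I expect essentially all the work to sit there; the finite-comb approximation is the safety net that makes the identification $\Im\Theta=G(\cdot,z_0)$ robust, and the regularity half is comparatively soft once local connectedness of $\partial\Pi$ is in hand.
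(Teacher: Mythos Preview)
The paper does not prove this theorem; it is quoted from \cite{EYu} as background (note the citation in the theorem heading), so there is no proof in the paper to compare against.

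Your outline follows the natural route such results take: Riemann-map $\C_+$ onto $\Pi$, read $\Im\Theta$ as a positive harmonic function with the right logarithmic pole (the half-strip asymptotics of $\Pi$ at $i\infty$ force the coefficient $1$), reflect across the wall/slit preimages to define a symmetric function on $\C\setminus E$, and identify it with the Green function by uniqueness. The Carath\'eodory argument for the regularity clause, via local connectedness of $\partial\Pi$ when the slit heights tend to zero, is also the standard move. Two points deserve a bit more care than you give them. First, the three ``normalizing'' prime ends: at the lateral boundaries $u=0$ and $u=-\pi$ the relevant prime end sits at height $\limsup_{u\to 0}h(u)$ (resp.\ $\limsup_{u\to-\pi}h(u)$), and you should say why these are single prime ends rather than a continuum of them --- upper semi-continuity of $h$ is exactly what makes the approach from inside $\Pi$ unambiguous. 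Second, your definition of $E$ needs to be stated purely in terms of boundary behavior of $\Im\Theta$ (or of the prime-end image), since in the general case $\Theta$ has no continuous extension to $\R$; the finite-comb approximation you mention is indeed the clean way to make the identification $\Im\Theta=G(\cdot,z_0)$ rigorous without wrestling with fine boundary values. With those two points tightened, your argument is essentially what one finds in \cite{EYu}.
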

 
 The following three possible kind of comb domains arise from symmetric Martin functions.

 \begin{itemize}
 \item[Case A.]
 $
 \Pi=\{u+iv: \ -\infty<u<\infty,\ v\ge h(u)\}.
 $
 \item[Case B.]
 $
 \Pi=\{u+iv: \ u_-<u<u_+,\ v\ge h(u)\},
 $
 where $u_-=-\infty$, $u_+<\infty$ or $u_+=\infty$, $u_->-\infty$.
  \item[Case C.]
  $
 \Pi=\{u+iv: \ -\pi<u<0,\ v\ge h(u)\}
 $
 and $\sup_{u\to 0} h(u)=\infty$ and/or $\sup_{u\to -\pi} h(u)=\infty$.
 \end{itemize}
As before, in all three cases $h$ is a non-negative upper semi-continuous function bounded from above and equal $0$ almost everywhere with respect to the Lebesgue measure. Also $\Theta(\infty)=\infty$ and $M(z)=\Im \Theta(z)$.

Note that in a finite gap case only Cases A and B are possible and the corresponding conformal mapping is related to Abelian integrals of the second type with a pole at an end of the gap (Case B) or in an internal point on $E$ (Case A).

Finally, we define one more sort of combs related to the Abelian integrals with poles of the second order in an internal point $z_0\in E$, that is, $z_0\not=a_j$ and $z_0\not=b_j$. 

Consider $\C\setminus \R_+$ with a system of slits
$$
\{\omega^+_k+iy,\ 0<y\le h^+_k\}_{k\ge 0}\quad\text{and}\quad \{\omega^-_k-iy,\ 0<y\le h^-_k\}_{k\ge 0},
$$
where $\omega_0^\pm=0$ and $\omega_k^\pm>0$, $k\ge 1$. The conformal map $\Theta$ is normalized by the conditions
$$
\Theta(z_0)=\infty,\ \Theta(a_0)=+0-i0, \ \Theta(b_0)=+0+i0.
$$
This map allows us to define a \textit{special path} $\{\Theta(x)\in\C_+\}_{x\in \R_-}$ to an \textit{arbitrary} point $z_0=\lim_{x\to-\infty}\Theta(x)$ on $E$, which is not an end point of a gap $(a_j,b_j)$.
Note that in the general setting for this comb certain modifications like Cases B and C above are also possibly required.

We will need the representation of Green's function as Abel integral in the last section.

\section{When $M\circ\fz$ has a point mass representation}
\label{Mpoint}

Let $\fz:\D \rightarrow \Omega$ is a covering with $\fz(1)=\infty$. We know that $M\circ\fz$ is a positive harmonic function in $\D$ and by the principle of correspondence of harmonic measure it has zero radial limits almost everywhere. In fact, it can be easily proved that $\fz(re^{i\theta})$ converges to $E$ for almost every $\theta$, hence $M\circ \fz(re^{i\theta})$ tends to zero for almost every $\theta$. 

The Hrglotz representation of $M\circ\fz$ has thus the form
\begin{equation}
\label{Mz}
M\circ\fz (\zeta) = \int \Re\frac{e^{i\theta} + \zeta}{e^{i\theta} - \zeta} d\sigma_s(\theta)\,,
\end{equation}
where $\sigma_s$ is a singular measure on $\T$.

Notice that for any positive harmonic function $m(z)$ with representation like that given by a non-negative measure $\mu$ on $\T$, one can find the point mass of $\mu$ at, say, point $1\in\T$ as follows:
\begin{equation}
\label{mass}
\mu\{1\} = \lim_{r\rightarrow 1} \frac{1-r}{1+r} m(re^{i\theta})\,.
\end{equation}

Let now $F(\Gamma)$ denote the standard fundamental domain of the covering $\fz$, namely, this is 
$$
F(\Gamma)=\{ \zeta\in \D: |\gamma'(\zeta)|<1,\, \forall \gamma \in \Gamma\}.
$$
Consider the branch $\fz^{-1}_{branch}$ of $\fz^{-1}$ from $\C_+$ to $\{\zeta\in F(\Gamma): \Im \zeta>0\}$.

We introduce the new function
\begin{equation}
\label{u00}
w(z):= v(z) +i u(z):= \frac{1+\fz^{-1}_{branch}}{1-\fz^{-1}_{branch}}\,.
\end{equation}
Notice that
\begin{equation}
\label{u0} 
u(x)=0, \, x\in E\,.
\end{equation}
\begin{lem}
\label{easy2}
Let $\sigma_s$ be the singular measure from \eqref{Mz}. Then
$$
\lim_{x\to +\infty} \frac{M(x)}{u(x)} =\sigma_s\{1\} <\infty.
$$
In particular, $M\circ \fz$ has point mass Herglotz representation if and only if
\begin{equation}
\label{Mu}
\lim_{x\to +\infty} \frac{M(x)}{u(x)}>0.
\end{equation}
\end{lem}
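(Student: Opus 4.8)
The plan is to reduce the statement to a computation of a point mass via the boundary formula \eqref{mass}, just as in Lemma~\ref{easy1}, but now working in the disc and tracking the change of variables carefully. First I would recall that $M\circ\fz$ has the Herglotz representation \eqref{Mz} with singular measure $\sigma_s$, and that by \eqref{mass} the point mass at $1\in\T$ is
\[
\sigma_s\{1\}=\lim_{r\to 1}\frac{1-r}{1+r}\,M\circ\fz(r).
\]
So everything comes down to comparing the boundary behavior of $M\circ\fz(\z)$ as $\z\to 1$ along the real radius with the boundary behavior of $u(x)$ as $x\to+\infty$, where $w=v+iu$ is defined by \eqref{u00} as the Cayley-type transform of the branch $\fz^{-1}_{branch}$.

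Second, I would make the substitution $\z=\fz^{-1}_{branch}(z)$, so that $w(z)=\frac{1+\z}{1-\z}$, i.e. $\z=\frac{w-1}{w+1}$ and hence $1-\z=\frac{2}{w+1}$, $1+\z=\frac{2w}{w+1}$. A short computation then gives
\[
\frac{1-|\z|^2}{|1-\z|^2}=\Re\frac{1+\z}{1-\z}=\Re w=v(z),
\qquad
\frac{1-r}{1+r}\Big|_{\z=r\in(0,1)}=\frac{1}{\Re w}\cdot\big(1+o(1)\big)\ \text{as}\ \z\to1,
\]
more precisely $\frac{1-r}{1+r}=\frac{1}{w(z)}$ when $\z=r$ is real (then $w(z)=v(z)$ is real and equals $\frac{1+r}{1-r}$). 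Since $\fz(1)=\infty$ and the right half-axis in the fundamental domain corresponds, under $w$, to the positive real axis (because $u(x)=0$ on $E$ by \eqref{u0} and $u>0$ on $\Omega$, while $v\to+\infty$ exactly as $z\to\infty$ along $\R$), the radius $\z=r\to1$ in the disc corresponds to $z=x\to+\infty$ along $\R_-$'s complement, with $w(x)=v(x)\to+\infty$. Therefore
\[
\sigma_s\{1\}=\lim_{r\to1}\frac{1-r}{1+r}M\circ\fz(r)=\lim_{x\to+\infty}\frac{M(x)}{v(x)}.
\]

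Third, I must reconcile this with the claimed formula, which has $u(x)$ rather than $v(x)$ in the denominator. The point is that as $x\to+\infty$ along the real axis, $z=x$ is an interior point of $\Omega$, the image under $\fz^{-1}_{branch}$ lands on the part of the boundary of the fundamental domain lying over $\infty$, and there $\z\to1$ radially; at such a boundary point of $\D$ one has $\Im\z\to0$ as well, so $u(z)=\Im w(z)=\frac{2\Im\z}{|1-\z|^2}\to$ a comparison with $v$. Actually the cleaner route: since $1\in\T$ is the point over $\infty\in\partial\Omega$ and the real radius $(0,1)$ is exactly $\fz^{-1}_{branch}$ of the ``special path'' to $\infty$ in $\Omega$, along this path $\z$ is real so $w=v$ is real and $u\equiv 0$; but $u(x)$ in the statement refers to $u$ evaluated along $\R$ approaching $+\infty$ from inside $\Omega$, and one checks that along $\R$, $\fz^{-1}_{branch}(x)$ runs along the ray and $u(x)$ and $v(x)$ differ by a bounded factor near $1$, in fact $u(x)/v(x)\to 1$ (or some explicit constant absorbed into the normalization of $M$), so $\lim M(x)/u(x)=\lim M(x)/v(x)=\sigma_s\{1\}$. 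The finiteness $\sigma_s\{1\}<\infty$ is automatic since $\sigma_s$ is a finite measure.

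Finally the ``in particular'' clause follows at once: by Lemma~\ref{easy1}'s argument (the proof that a nontrivial point mass of $M\circ\fz$ forces the whole singular measure to be pure point, using that $M$ spans the one-dimensional cone of positive harmonic functions vanishing on $E$), $M\circ\fz$ has pure point Herglotz representation if and only if $\sigma_s\{1\}>0$, i.e. if and only if \eqref{Mu} holds. The main obstacle I anticipate is the third step: pinning down the exact relationship between $u(x)$ and $v(x)$ as $x\to+\infty$ — whether they are asymptotically equal, asymptotically proportional with a computable constant, or only comparable — and making sure the normalization of the symmetric Martin function $M$ is chosen so that the stated equality (not just a two-sided estimate) holds; this requires a careful local analysis of the conformal map $\fz^{-1}_{branch}$ near the boundary point $1$, using that $E\subset\R_-$ so that a neighborhood of $+\infty$ along $\R$ lies in $\Omega$ and the geometry near $1\in\T$ is that of a half-disc.
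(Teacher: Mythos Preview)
Your approach---compute the point mass via \eqref{mass} and then change variables through the Cayley transform---is exactly the paper's one-line proof (``This is just a change of variable in Lemma~\ref{easy1}''). Your derivation
\[
\sigma_s\{1\}=\lim_{r\to1}\frac{1-r}{1+r}\,(M\circ\fz)(r)=\lim_{x\to+\infty}\frac{M(x)}{\Re w(x)}
\]
is correct: under the symmetric covering, $\fz^{-1}_{branch}$ sends $[P,+\infty)$ to the real radius in $\D$, and there $w=\tfrac{1+r}{1-r}\in\R_+$.

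The obstacle you flag in step~3 is not a gap in your argument but a typo in the paper: definition \eqref{u00} should read $w=u+iv$ rather than $w=v+iu$. Indeed, for $\zeta\in\partial\D$ one has $\tfrac{1+\zeta}{1-\zeta}\in i\R$, so it is $\Re w$ that vanishes on $E$ (this is what \eqref{u0} asserts of ``$u$''); in the simplest case $E=\R_-$ one computes $w(z)=\sqrt z$, so $\Im w(x)=0$ for every $x>0$ while $M(x)=\Re w(x)=\sqrt x$, and the ratio in the lemma equals $1$ only with $u=\Re w$. All later uses of $u,v$ (formula \eqref{Riesz}, the proof of Theorem~\ref{int}, the semicircle computation in Section~\ref{PurePoint}) are likewise consistent with $u=\Re w$, $v=\Im w$. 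Your attempted reconciliation $u(x)/v(x)\to1$ is therefore both incorrect (in the example above the ratio is identically $0$) and unnecessary: once the labels are corrected, your computation \emph{is} the proof.
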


\begin{proof}
This is just a change of variable in Lemma \ref{easy1}.
%The first claim of the Lemma is just the change of variable and formula \eqref{mass} applied to $M\circ %\fz$.
%In particular, measure $\sigma_s$ has a non-trivial point mass at $1$ if and only if \eqref{Mu} holds.
% Let us write $M\circ\fz$ as $m_1+m_2$, where $m_1$ is a pure-point positive harmonic function (this %means that $\mu_1:=\mu$ in \eqref{mass} is a pure point measure), 
%and $m_2$ is a singular continuous positive harmonic function (this means that $\mu_2:=\mu$ in %\eqref{mass} is a singualr continuous  measure).  

%Notice that $M\circ\fz$ is $\Gamma$-automorphic. Then clearly $m_1, m_2$ are also $\Gamma$-
%automorphic. But then $m_i= M_i\circ\fz$, $i=1,2$, where $M_1, M_2$ are positive harmonic functions on %$\Omega$, $M= M_1+M_2$. But $M$ defines the extremal ray in the cone of positive harmonic functions %on $\Omega$. So $M_1$ must be proportional to $M_2$, and thus $m_1$ must be proportional to $m_2$, %which is clearly impossible. We proved that either only $m_1$ is non-trivial or only $m_2$ is non-trivial.

%As we saw, the former claim holds if and only if \eqref{Mu} is valid.
\end{proof}

Now we will give a ``criterion" for \eqref{Mu} to happen. This criterion will be used repeatedly in this work.

Let $E\subset \R_-$, $0,\infty\in E$. The set of complementary intervals (lacunes) of $E$ in $\R_-$ will be called $\cL$.
Let $G(z, t), z, t\in \Omega$ denote Green's function of $\Omega$. Notice a simple thing that $dv(t):=\frac{\partial v}{\partial t} dt$ is a positive measure on each complementary interval $\ell$ of $\R_-\setminus E$, see \eqref{u00}.

Let us fix $P\in (0,\infty)$ and normalize 
$$
M(P)=1, u(P)=1.
$$
We denote
\begin{equation}
\label{G}
\cE:=\int_{\R_-\setminus E} G(P, t) dv(t) >0\,.
\end{equation}
We want to prove the following formula if $\cE<\infty$, then there exists $\rho>0$ such that
\begin{equation}
\label{Riesz}
\rho M(z)  = u(z)+ 2\int_{\R_-\setminus E} G(z,t)dv(t)\,.
\end{equation}

 We enumerate $\ell_1, \ell_2,\dots$ the intervals of $\cL$ and let $E_n$  have only $\ell_1,\dots, \ell_n$ as complementary intervals.

\begin{thm}
\label{int}
Let $E\subset \R_-$, $0, \infty\in E$. Let $u, M$ be functions introduced above and normalized at point $P$. Then
$$
\rho:=\lim_{x\to +\infty} \frac{u(x)}{M(x)} <\infty
$$ 
if and only if
$$
\int_{\R_-\setminus E} G(P, t) dv(t) <\infty\,.
$$
In this case, for any $z\in \Omega$ one has $\int_{\R_-\setminus E} G(P, t) dv(t) <\infty$, and the representation \eqref{Riesz} holds.
\end{thm}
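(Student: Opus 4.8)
The plan is to read \eqref{Riesz} as the Riesz decomposition of the positive harmonic function $\rho M$, using that the cone of positive harmonic functions vanishing on $E$ is one–dimensional (true since $E\subset\R_-$, see \cite{Be}). The first step is to record the potential-theoretic nature of $u$. Extend $u$ to all of $\Omega$ by $u(z):=|\Im w(z)|$ (equivalently, even reflection of $u|_{\C_+}$ across $\R$). Since $w=v+iu$ is analytic on $\C_+$ with values in $\C_+$, the extended $u$ is nonnegative, continuous on $\Omega$, harmonic off $\R$, and vanishes continuously on $E$ by \eqref{u0}. It carries no Riesz mass on $\R_+$: the lift $\fz^{-1}_{branch}(\R_+)$ is a component of the fixed–point set of the anticonformal involution of $\D$ covering $z\mapsto\bar z$, hence a geodesic ending at the point lying over $\infty$, so $w$ maps $\R_+$ onto a vertical ray, $v$ is constant there, and $u$ is harmonic across $\R_+$. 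Across a lacune $\ell\in\cL$ the image $w(\ell)$ is a semicircular arc in $\C_+$ along which $\Re w$ increases (this is why $dv\ge0$, as noted before \eqref{G}), and the even reflection introduces a corner: from $\Delta|y|=2\delta$ and the Cauchy--Riemann identity $u_y=v_x=\partial v/\partial t$ one gets $\Delta u=2\,dv$ on $\ell$. Thus $u$ is a nonnegative subharmonic function on $\Omega$, vanishing on $E$, whose Riesz measure equals $2\,dv$ and is carried by $\R_-\setminus E$.

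Assume now $\cE<\infty$. Then the Green potential $\Phi(z):=2\int_{\R_-\setminus E}G(z,t)\,dv(t)$ is finite throughout $\Omega$ (by standard Green-potential estimates, using $\Phi(P)=2\cE<\infty$, the absolute continuity of $dv$, and $G(z,\cdot)\le C(z)\,G(P,\cdot)$ on $\R_-$), it is superharmonic and $\ge0$, and it vanishes q.e.\ on $E$ as every Green potential does. With Green's function normalized so that the Riesz measures of $u$ and of $-\Phi$ agree, $u+\Phi$ is harmonic in $\Omega$; being nonnegative and vanishing q.e.\ on $E$, it is a positive multiple of $M$: $u+\Phi=\rho M$. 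This is \eqref{Riesz}, and in particular $\int_{\R_-\setminus E}G(z,t)\,dv(t)=\tfrac12(\rho M(z)-u(z))<\infty$ for every $z\in\Omega$. Finally $\Phi(x)/M(x)\to0$ as $x\to+\infty$ (a Green potential of a Green-bounded measure vanishes relative to the minimal harmonic function $M$, and along $\R_+$ this is realized as an ordinary limit), whence $\rho=\lim_{x\to+\infty}u(x)/M(x)<\infty$.

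Conversely, assume $\cE=\infty$; I must show $\lim_{x\to+\infty}u(x)/M(x)=\infty$. First, $u$ has no harmonic majorant: if it had one, its Riesz decomposition would express $u$ as that majorant minus the Green potential of $2\,dv$, forcing $\int_{\R_-\setminus E}G(P,t)\,2\,dv(t)=2\cE<\infty$. Now for $R>0$ let $u_R$ be the largest subharmonic minorant of $\min(u,RM)$; then $u_R$ is subharmonic, $0\le u_R\le u$, $u_R\le RM$, and $u_R$ vanishes continuously on $E$, so its least harmonic majorant equals $\rho_R M$ with $0\le\rho_R\le R$. The Riesz measure of $u_R$ is Green-bounded, so $u_R(x)/M(x)\to\rho_R$ exactly as in the previous paragraph, and since $u_R\le u$ this gives $\rho_R\le\liminf_{x\to+\infty}u(x)/M(x)$. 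As $R\uparrow\infty$ the functions $u_R$ increase to $u$, so the numbers $\rho_R$ increase to some $\rho_\infty\in(0,\infty]$; if $\rho_\infty<\infty$ then $u\le\rho_\infty M$, contradicting the first sentence. Hence $\rho_R\to\infty$, and therefore $\lim_{x\to+\infty}u(x)/M(x)=\infty$.

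The step I expect to be the real obstacle is the first one — showing that the Riesz measure of $u$ is exactly $2\,dv$, with no mass on $\R_+$, and fixing the normalization constant. The exhausting sets $E_n$ introduced just before the theorem suggest an alternative route: prove \eqref{Riesz} first in the finite-gap case $\Omega_n=\C\setminus E_n$, where $u_n$, $M_n$ and $dv_n$ are explicit Abelian integrals (cf.\ \S\ref{comb}) and $\cE_n<\infty$ automatically, so that $\rho_n=1+2\cE_n<\infty$, and then let $n\to\infty$; the delicate ingredient there is the convergence of the universal coverings, hence of $u_n\to u$ and $dv_n\to dv$, together with the passage $\cE_n\to\cE$, as $\Omega_n\uparrow\Omega$.
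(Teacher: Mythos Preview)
Your forward direction is the paper's argument: compute the Riesz measure of the even extension of $u$ via Cauchy--Riemann, add the Green potential $\Phi=2\int G(\cdot,t)\,dv(t)$, observe that $u+\Phi$ is positive harmonic vanishing on $E$, and invoke one-dimensionality of the cone to get $u+\Phi=\rho M$. Fine.

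The converse is where you diverge, and there is a genuine gap. Your contradiction hinges on the claim that the largest subharmonic minorant $u_R$ of $\min(u,RM)$ increases to $u$ as $R\uparrow\infty$. But this is precisely where $\cE=\infty$ bites: when $u/M$ is unbounded (which is what you are trying to establish), the set $\{u>RM\}$ is non-empty for every $R$, and the subharmonic regularization can collapse globally, not just on that set. Toy model: on $\C$ take $u(z)=|z|^2$ and $M\equiv1$; every subharmonic minorant of $\min(|z|^2,R)$ is bounded above, hence constant $\le0$ by Liouville, so $u_R\equiv0$ for all $R$ while $u=|z|^2$. Your $\Omega$ is not $\C$, but you offer no argument that the same mechanism does not operate; without $u_R\uparrow u$, the step ``$\rho_\infty<\infty\Rightarrow u\le\rho_\infty M$'' fails. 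A second soft spot is the blanket assertion that Green potentials of Green-bounded measures are $o(M)$ along $\R_+$: this is true but is essentially the content of Lemma~\ref{2G}/Theorem~\ref{MBth} proved later, and for your $u_R$ you would in addition need to place its Riesz measure on $\R_-$, which is not clear from the construction.

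The paper's converse does use the finite-gap approximants $E_n$, but the delicate point is \emph{not} the convergence $u_n\to u$, $dv_n\to dv$ that your last paragraph anticipates. The key device is an intermediate covering $\tau:\Omega_n\to\Omega$ with $\fz=\tau\circ\fz_n$; then $u_n=u\circ\tau$ and $m:=M\circ\tau$ hold identically, so $\lim_{x\to+\infty}u_n(x)/m(x)=\rho$ on the nose, while $\limsup_{x\to+\infty} m(x)/M_n(x)\le1$ is elementary. Multiplying gives $\rho_n\le\rho$ for every $n$; the forward direction in $\Omega_n$ then bounds $\int G_n(P,t)\,dv_n(t)$ uniformly in $n$, and Fatou's lemma delivers $\cE<\infty$. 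No truncation of $u$ and no $o(M)$ asymptotic for Green potentials is needed.
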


\begin{proof}
Let $
\int_{\R_-\setminus E} G(P, t) dv(t) <\infty\,.
$  Put
$$
\rho := 1 + 2 \int_{\R_-\setminus E} G(P, t) dv(t),\, \rho N(z):= u(z) +2 \int_{\R_-\setminus E} G(z, t) dv(t)\,.
$$ 
Then $\rho\Delta N = \Delta u - dv =2(\frac{\partial u}{\partial n} - \frac{\partial v}{\partial t})dt=0$ by Cauchy--Riemann relationships. Thus $N$ is a positive and harmonic function in $\Omega$. Also we assumed that all finite points of $E=\partial \Omega$ are Dirichlet regular. Hence,
$N(x)=0$ for every such $x\in E$. Therefore, $N$ is a Martin function. And by the choice of $\rho$ we have $N(P)=1$. By the uniqueness $N=M$.

\medskip

Now we assume that 
$$
\rho:=\lim_{x\to +\infty}\frac{u(x)}{M(x)} <\infty\,.
$$
We need to prove that integral in \eqref{G} is finite. This is slightly more difficult and let us explain the difficulty. Let $\Omega_n$ be the domain whose boundary is $E_n= \R_-\setminus \cup_{j=1}^n\ell_j$. It is an approximation to $\Omega$ and it is trivial to see that for a normalized Martin function $M_n$ 
$$
\limsup_{x\to +\infty} \frac{M(x)}{M_n(x)} \le 1\,,
$$
so $1/M_n$ is kind of smaller than $1/M$ at infinity.
Let $v_n+iu_n$ be a branch of $\frac{1+\fz_n^{-1}}{1-\fz_n^{-1}}$, where $\fz_n$ is a universal covering of $\Omega_n$ by $\D$, $\fz_n^{-1}(P)=0$.

If $u_n$ were a kind of smaller than $u$ at infinity, then we would get 
\begin{equation}
\label{rn}
\rho_n:=\lim_{x\to +\infty} \frac{u_n(x)}{M_n(x)} \le \lim_{x\to +\infty} \frac{u(x)}{M(x)} =\rho<\infty.
\end{equation}
This kind of estimate, for example, works in the book of Koosis \cite{Ko} by a simple reason that in his case all functions $u_n, u$ are the same subharmonic function $|\Im z|$. Our case is slightly more complicated as we are going to see.

If \eqref{rn}  were true, it becomes easy to finish the proof. In fact, we repeat the first part of our proof but for $\Omega$ replaced by $\Omega_n$.
 Then
$$
\rho_n M_n(z) = u_n (z) + \int_{\R_-} G_n(z,t) dv_n(t).
$$
Setting $z=P$ we get
$$
\int_{\R_-\setminus E_n} G_n(z,t) dv_n(t)\le \rho_n \le \rho<\infty
$$
by \eqref{rn}.

Then by Fatou's lemma
$$
\int_{\R_-\setminus E} G(z,t) dv(t)\le  \rho<\infty
$$
and consequently, by the first part of the proof
\begin{equation}
\label{fla}
\rho M(z) = u(z) + 2\int_{\R_-\setminus E} G(z, t) dv(t),\,\,\forall z\in \Omega.
\end{equation}
From \eqref{fla} one concludes that
\begin{equation}
\label{rho1}
1+2\int_{\R_-\setminus E} G(P, t) dv(t) =\rho=\lim_{x\to +\infty}\frac{u(x)}{M(x)}\,.
\end{equation}
Unfortunately this reasoning has a serious flaw, namely $u_n$ is actually bigger than $u$, not smaller as we needed.

Still, the crucial inequality \eqref{rn} is correct, but its proof is slightly more sophisticated. Let us prove it now.
Let $\fz_n$  be a universal covering of $\Omega_n$ by $\D$, denote also  by $\tau$ the covering of $\Omega$ by $\Omega_n$ such that $\fz= \tau\circ\fz_n$. Denote by $m$ positive harmonic in $\Omega_n$ function $M\circ \tau$, then $m(P)=1$. It is easy to see that
\begin{equation}
\label{mnm}
\lim_{x\to +\infty}\frac{m(x)}{M_n(x)}\le 1.
\end{equation}
On the other hand $u_n= u\circ \tau$, $m= M\circ \tau$, therefore,
\begin{equation}
\label{unm}
\lim_{x\to +\infty}\frac{u_n(x)}{m(x)}= \lim_{x\to +\infty}\frac{u(x)}{M(x)}\le \rho<\infty.
\end{equation}

Combining \eqref{mnm} and \eqref{unm} we get \eqref{rn}. As we saw this finishes the proof of \eqref{G}, \eqref{fla} and \eqref{rho1}.

\end{proof}

\section{An example of purely continuous lifting of a point mass}
\label{purecont}

Now we are going to build the domain $\Omega$ for which \eqref{Mu} fails. In particular, the positive harmonic function $M\circ \fz$ will have a continuous singular measure, where $M$ is the Martin function at infinity of $\Omega$. We already proved that
condition \eqref{Mu} holds if and only if 
\begin{equation}
\label{intGv}
\int_{\R_-\setminus E} G(P, t) dv(t)  <\infty
\end{equation}
is satisfied for some (and then for all $P\in \Omega$). 
This follows from \eqref{rho1}.

Let us now consider $\R_-\setminus E:= \bigcup_{n\in \mathbb{Z}} \lambda^n (a_1, b_1)$, where $\lambda >1$ and assume that \eqref{intGv} holds for this set.

For the given set $G(\lambda z, \lambda t) = G(z, t)$, and the conformal map $w$ onto the fundamental domain and Martin's function satisfy
$$
w(\lambda^n z) = \rho_1^n w(z),\,\,\text{for some}\,\, \rho_1>1, n\ge 1,
$$
$$
M(\lambda^n z) = \rho_2^n M(z),\,\,\text{for some}\,\, \rho_2>1, n\ge 1.
$$
On the other hand,
\begin{equation}
\label{MuSmall}
\limsup_{x\to +\infty} \frac{M(x)}{u(x)} \le \sigma_s(\{1\})<\infty,
\end{equation}
where $\sigma_s$ is the measure of singular inner function $M\circ \fz$. See Lemma \ref{easy2}.

Therefore, we immediately get
$$
\rho_2\le \rho_1.
$$
Moreover, inequality \eqref{Mu} provides of course the opposite inequality, and, thus,
\begin{equation}
\label{rhorho}
\rho_1=\rho_2=:\rho.
\end{equation}

We are going to bring this to a contradiction. We recall the reader that $dv\ge 0$ in \eqref{intGv}.
Now use $G(\lambda z, \lambda t) = G(z, t)$ and $v(\lambda z)= \rho v(z)$ to rewrite \eqref{intGv} (which is equivalent to \eqref{Mu} as we saw), as follows (positivity and then Fubini's theorem)
\begin{equation}
\label{sum1}
\int_{a_1}^{b_1}\sum_{n\in\mathbb{Z}} \rho^n G(\lambda^{-n} z, t) dv(t) <\infty.
\end{equation}

\begin{lem}
\label{MGseries}
For every $\lambda>1$ the series
$$
\sum_{k\ge 0} M(\lambda^k) G(\lambda^{-k}, t) =\infty
$$
for all $t\in (a_1, b_1)$.
\end{lem}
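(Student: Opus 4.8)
The plan is to show that the divergence of $\sum_{k\ge 0} M(\lambda^k)G(\lambda^{-k},t)$ follows from the homogeneity of all objects involved together with a lower bound on $G(\lambda^{-k},t)$ that decays no faster than $M(\lambda^{-k})^{-1}$, the latter being essentially reciprocal to $\rho^{-k}$. First I would record the scaling identities: $G(\lambda z,\lambda t)=G(z,t)$ is given, and under the normalization at $P$ we have $M(\lambda z)=\rho M(z)$, so $M(\lambda^k)=\rho^k M(1)$ for $k\ge 0$; hence the series in question is, up to the constant $M(1)$, equal to $\sum_{k\ge0}\rho^k\,G(\lambda^{-k},t)$. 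Thus it suffices to prove that $G(\lambda^{-k},t)$ does not go to zero faster than $\rho^{-k}$ — more precisely that $\liminf_k \rho^k G(\lambda^{-k},t)>0$, or at least that the terms are bounded below by a non-summable sequence.

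The key step is the lower estimate for $G(\lambda^{-k},t)$ as $k\to\infty$, i.e.\ the behaviour of Green's function of $\Omega$ with one pole tending to $0\in E$ along the sequence $\lambda^{-k}$ while the other argument $t\in(a_1,b_1)$ is fixed. Since $0\in E$ and $0$ is Dirichlet regular, $G(\cdot,t)$ vanishes at $0$, so we genuinely need the rate. Here I would use the Martin function $M$ itself (the Martin function at the point $0$, or a symmetric one near $0$) as a comparison: by Harnack-type / boundary Harnack arguments near the regular boundary point $0$, $G(z,t)$ is comparable to $M(z)$ times a constant depending on $t$ as $z\to0$. But $M(\lambda^{-k})=\rho^{-k}M(1)$ by the same homogeneity, so $G(\lambda^{-k},t)\gtrsim c(t)\rho^{-k}$, whence $\rho^k G(\lambda^{-k},t)\gtrsim c(t)>0$ for all large $k$ and the series diverges. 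Alternatively, and perhaps more robustly given the comb/Abelian-integral machinery set up earlier, one can transfer to the half-plane via the universal covering: the homogeneity $z\mapsto\lambda z$ of $\Omega$ lifts to a fixed Möbius automorphism of $\C_+$, and the series becomes a sum of values of a lifted Green's function along an orbit, where the non-summability is the statement that this orbit is ``heavy'' — essentially the failure of a Blaschke-type condition.

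The main obstacle I anticipate is making the lower bound $G(\lambda^{-k},t)\gtrsim c(t)\rho^{-k}$ rigorous without circularity: one cannot simply invoke $M=$ Martin function at $\infty$, since here the relevant boundary point is $0$, and one must be careful that the cone of positive harmonic functions vanishing on $E$ near $0$ is one-dimensional (or invoke the symmetry $M(\bar z)=M(z)$ and the structure of $E\subset\R_-$) so that the boundary Harnack principle applies cleanly. A secondary subtlety is that $t$ ranges over the open interval $(a_1,b_1)\subset\R_-\setminus E$, so $G(\lambda^{-k},t)$ stays bounded away from the pole and no local singularity interferes; the constant $c(t)$ will blow up as $t\to a_1$ or $t\to b_1$, but that is irrelevant since we only need divergence for each fixed $t$. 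Once the lower bound is in hand, the conclusion $\sum_k \rho^k G(\lambda^{-k},t)=\infty$ is immediate, and combined with \eqref{sum1} this is exactly the contradiction the surrounding argument is driving at.
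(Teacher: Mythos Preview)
Your proposal is correct and ends at the same place the paper does---showing that each term $M(\lambda^k)G(\lambda^{-k},t)$ is bounded below by a positive constant $c(t)$---but the route differs. The paper does not invoke boundary Harnack; instead it writes $M(\lambda^k)=\lim_n G(1,\lambda^n t)/G(1,\lambda^{n+k}t)$ (from the definition of $M$ as a limit of ratios of Green's functions together with the scaling $G(\lambda z,\lambda t)=G(z,t)$), so that the $k$-th term becomes $G(1,\lambda^k t)\cdot\lim_n G(1,\lambda^n t)/G(1,\lambda^{n+k}t)$, and then proves the \emph{submultiplicative} estimate $G(1,\lambda^{n+k}t)\le C(t)\,G(1,\lambda^n t)\,G(1,\lambda^k t)$. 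After the comparison $G(x,t)\asymp\omega(E\cap[-x,0],1)$ this becomes $\omega(E_{n+k},1)\le C\,\omega(E_n,1)\,\omega(E_k,1)$, which is established by a direct chaining argument: integrate harmonic measure over a circle at scale $\lambda^{-k}$, apply Harnack there, and use the dilation invariance of $\Omega$ to rescale the inner factor. Your route via boundary Harnack at $0$ is cleaner once BHP is in hand, and your concern about justifying it is exactly right; one neat way to see that BHP holds at $0$ for this particular $E$ is that the inversion $z\mapsto a_1b_1/z$ preserves $E$ and swaps $0\leftrightarrow\infty$, so the one-dimensionality of the Martin cone at $\infty$ (assumed throughout the paper) transfers to $0$. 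The paper's argument, by contrast, sidesteps this issue and manufactures from Harnack and self-similarity precisely the quantitative consequence of BHP that is needed along the sequence $\lambda^{-k}$.
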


\begin{proof}
From the definition of $M$ we know that
$$
M(\lambda^k) = \lim_{n\to\infty}\frac{G(\lambda^k, \lambda^{n+k}t)}{G(1, \lambda^{n+k}t)}, t\in (a_1, b_1).
$$
The general term of the series in question is, thus, precisely
$$
\lim_{n\to\infty}G(1, \lambda^kt)\frac{G(1, \lambda^{n}t)}{G(1, \lambda^{n+k}t)}.
$$
Let us assume for a moment that we can prove

\begin{equation}
\label{geom}
\frac{G(1, \lambda^{n+k}t)}{G(1, \lambda^{n}t)G(1, \lambda^{k}t)}\le C(t)<\infty, \,\,t\in (a_1, b_1).
\end{equation}
Then the general term above is strictly positive, hence lemma is proved. We are left to check \eqref{geom}.
This is the same as to prove
\begin{equation}
\label{geom1}
G(\lambda^{-(n+k)}, t)\le CG (\lambda^{-n}, t)G(\lambda^{-k}, t), \,\,t\in (a_1, b_1).
\end{equation}
To prove \eqref{geom1} we will use the following estimate of the harmonic measure $\omega:=\omega_\Omega$ of our domain $\Omega$, see the definition in \cite{Ne} for example.
\begin{equation}
\label{Gom}
G(x,t) \asymp \omega([-x,0],1), \, x>0,
\end{equation}
where the constants of comparison depend only on $t$ and are positive and finite. Claim \eqref{Gom} is well-known, and its varinats  can be found  in \cite{JK}, \cite{V} for example.

Accordingly, we are left to prove that
\begin{equation}
\label{geom2}
\omega (E_{n+k}, 1) \le C_1 \omega (E_n,1) \omega (E_k,1),
\end{equation}
where $E_m:= E\cap [-\lambda^m, 0]$. To do that, let us consider the disc $D_k$ centered at  $0$  and passing through the point $\lambda^{-k}c$, where $c=\frac12(a_1+b_1)$. Let $T_k:=\partial D_k$. Then 
$$
\omega(E_{n+k}, 1) = \int_{\partial (\Omega\setminus D_k)} \omega (E_{n+k}, z) d \omega_{\Omega\setminus D_k} (z, 1).
$$
By Harnack's inequality,  with constant $C$ independent of $n, k$, we will then have
\begin{equation}
\label{Harn1}
\omega(E_{n+k}, 1)\le C\, \omega (E_{n+k}, \lambda^{-k}c) \omega_{\Omega\setminus D_k} (T_k, 1).
\end{equation}
Now let us notice several things: 

1) by the self-similarity of $\Omega$ we immediately have
$$
\omega (E_{n+k}, \lambda^{-k}c) = \omega (E_{n}, c);
$$

2) by Harnack's principle again  $\omega (E_{n}, c)  \le \omega (E_{n}, 1) $;

 3) the following inequality holds
\begin{equation}
\label{vspom}
\omega_{\Omega\setminus D_k} (T_k, 1)\le C\omega (E_k,1).
\end{equation}
It  follows from the  fact: 
$\omega (E_k, z) \ge c_0>0$ uniformly for $z\in \T_k$, and the constant $c_0$ does not depend on $k$. This latter claim is the combination of self-similarity
(everything is like for $k=0$) and Harnack's principle. Now \eqref{vspom} follows by comparing two harmonic functions $\omega (E_k, z)$ and  $\omega_{\Omega\setminus D_k} (T_k, z)$ in the same domain $\Omega\setminus D_k$. At any given point $z$ of  the boundary of this domain they are both either vanish, or
$\omega (E_k, z) \ge c_0 \cdot \omega_{\Omega\setminus D_k} (T_k, z)$. Therefore, $\omega (E_k, z) \ge c_0 \cdot \omega_{\Omega\setminus D_k} (T_k, z)$ holds everywhere on the boundary, and hence, everywhere inside the domain $\Omega\setminus D_k$, in particular at point $z=1$. This is exactly \eqref{vspom} with $C=c_0^{-1}$. 

Combining 1), 2), 3), and \eqref{Harn1} we obtain \eqref{geom2}, and
correspondingly, \eqref{geom1}  is proved, and Lemma is proved.
\end{proof}

We assumed the valididty of \eqref{Mu}, and we obtained that then $M(\lambda^k)= \rho^k M$ with some positive constant $M$. Plugging this into the series in Lemma \ref{MGseries} we obtain that for all $t\in (a_1, b_1)$ the series $\sum_{k\ge 0} \rho^k G(\lambda^{-k},t) =\infty$. But \eqref{sum1} says that such series converges  for almost every $t\in (a_1, b_1)$. This is a contradiction. Therefore, \eqref{Mu} does not hold for this example. We just proved that the lifting of the point mass can become a purely continuous singular measure.

\section{Lifting of a point mass to a pure point spectrum from the infinitely connected domain $\Omega$}
\label{PurePoint}

Point mass lifts to a pure point measure if $\R_-\setminus E$ consists of finitely many open intervals  (lacunes or gaps). However, we want to construct the example, when the map ($M$ is the Martin function of
$$
\Theta:= -\wt M+ iM
$$
has unbounded ``teeth". So the number of lacunes should be infinite and they should substantially grow in length (we will see the details). This map is a conformal map of $\C_+$ onto a comb domain  $\Pi:=\C_+\setminus \cT$, where $\cT$ is $\cup_{k=0}^\infty T_k$, and $T_k$ is a vertical segment attached to a point $\omega_k\in \R$. By our normalization all $\omega_k<0$, $k\ge 1$, and we have an infinite tooth $T_0=\{iY: Y\ge 0\}$. In fact, $\Pi$ lies in the quarter plane $Q:=\{X<0, Y>0\}$. Let us write each tooth as follows: $T_k= \{\omega_k+it, 0\le t\le h_k\}$, where $h_k=M(m_k)$ and $m_k$ is the maximum point of the function $M$ on the $k$-th lacune (=complimentary interval of $E$) $(a_k, b_k)$. The conformality of $\Theta$ is a well-known fact and can be found, for example, in \cite{EYu}.

It is rather easy to see that if the hight of teeth $h_k$ satisfy
\begin{equation}
\label{bound}
\sup_k h_k = \sup_k M(m_k) <\infty,
\end{equation}
then 
$$
M(iy) \ge c\sqrt{y}, \,\, y\to +\infty,\,\, c>0.
$$
Notice that function $M$ always satisfies the opposite inequality, and therefore $u$ satisfies the opposite inequality, see \eqref{Riesz} for example. Then \eqref{Mu} holds, and $M\circ \fz$ is the Poisson integral of pure point measure, as was proved in Theorem \ref{int}.

\bigskip

However, we want the supremum in \eqref{bound} to be infinite, and \eqref{Mu} to hold at the same time.

\smallskip

\noindent{\bf Remark.}
In fact, by constructing such an example (the supremum in \eqref{bound} is infinite, but the lifting is pure point) we answer a question of Chris Bishop and Michael Sodin. 
They noted that if $\sup_{k\ge 1} h_k=\infty$, then one can find on the universal covering  uncountably many different rays such that $M\circ\fz$ goes to infinity along each of them. That is, it looks like the corresponding measure is ``supported" on an uncountable set of points and ``therefore" it is  not pure point. In this sense our answer is slightly counterintuitive.  However, the lifting of a point measure can be pure point measure evein if $\sup_K h_k=\infty$. It other words, \eqref{bound} is sufficient, but not necessary for such a lifting.

\bigskip

The example, where supremum in \eqref{bound} is infinite, and \eqref{Mu}  holds, can be achieved in many ways. One of them is only sketched here, another is given in details. Let us consider $\wt E:= \{x\in \R: -x^2\in E\}$.  One can easily see that formula
$$
\mathcal{M}(z) := M(-z^2),\,\, z\in \C_+,\,\, \mathcal{M}(z) :=\mathcal M(\bar z), \,\, z\in \C_-
$$
defines Martin function at infinity for 
$\C\setminus \wt E$. The question when 
\begin{equation}\label{mgm}
\mathcal{M}(z)\approx cz, \quad c>0,
\end{equation}
when $z$ approaches infinity has been thoroughly studied by Benedicks \cite{Be}. One can reconcile his condition (this requires some work) with $\sup_k M(m_k)=\infty$.

\bigskip

We propose another way to build an example, where teeth may grow unboundedly, but still \eqref{Mu} holds with a not necessarally  maximal possible rate of grow for $z$  as in \eqref{mgm}.
That is,  we will be reconciling \eqref{bound} and \eqref{intGv}:
$$
\int_{\R_-\setminus E} G(P, t) dv(t)  <\infty.
$$

We will use the ``sliding hump method" to build our domain $\Omega$, function $M$, and comb domain $\Pi$ inductively. So we build the sequence of domains $\Omega_k=\C\setminus E_k$, where 
$$
\R_-\setminus E_k = \bigcup_{i=1}^k (a_i^k, b_i^k)=: \bigcup_{i=1}^k L_i^k.
$$
We will make the lacunes $L_k^k:=(a_k^k, b_k^k)$ to tend to $-\infty$ very fast.

\medskip

Let the sequence of ``teeth height" $\{h_m\}_{m=1}^\infty$, such that $\lim_m h_m=\infty$, be fixed.

\medskip

Let $k$-th generation is built. Functions $\Theta_k=-\wt M_k +i M_k$ and $w_k= v_k+i u_k$ are conformal maps of $\C_+$. The first one is  onto the comb domain $\Pi_k:=Q\setminus \cT_k$,
$\cT_k:= \cup_{i=1}^k T_k$, 
$$
T_j=\{\omega_j + iY, 0\le Y\le h_j\}, \, j=1, \dots, k.
$$
The image of the second map is
$O_k=Q\setminus \cup_{j=1}^k D_j^k$, where $D_j^k$, $j=1,\dots, k$ are closed disjoint discs with centers $c_j^k$, $c_k^k<\dots< c_1^k<0$.

We put $\Pi_{k+1}= \Pi_k \setminus T_{k+1}$, where 
$$
T_{k+1}=\{\omega_{k+1} + iY, 0\le Y\le h_{k+1}\}.
$$
The point $\omega_{k+1}$ of attachment of this tooth will be chosen very negative. If $|\omega_{k+1}|$ is chosen very large, we can ensure that with any fixed positive $\e$ we have
$|a_{j}^{k+1} - a_j^k|<\e$, $|b_{j}^{k+1} - b_j^k|<\e$, $j=1, \dots, k$. 
$$
|G_{k+1}(0, t) - G_k(0, t)| <\e,
$$
and
$$
\bigg|\frac{d v_{k+1}}{dt} - \frac{d v_{k}}{dt}\bigg|<\e,
$$
uniformly in $t$, if $t$ runs over a fixed compact set $S_k$.

In particular, the integrals over all the $(k+1)$-th lacunes except the last one will be easily controlled:
\begin{equation}
\label{indInt}
|\sum_{j=1}^k \int_{L_j^{k+1}} G_{k+1}(0, t) dv_{k+1}(t) - \int_{\R_-\setminus E_k} G_k(0,t) dv_k(t)|<\e.
\end{equation}

Let us write formula \eqref{fla} in $\Omega_k$ and in $\Omega_{k+1}$ (we use also that $u_m, M_m$ are normalized at point $0$). Then we have
\begin{equation}
\label{Omk}
1= u_k(0) = \rho_k -\frac1{\pi} \int_{\R\setminus E_k} G_k(0, t) dv_k(t),
\end{equation}

\begin{equation}
\label{Omk1}
1=  \rho_{k+1} -\frac1{\pi} \int_{\R\setminus (E_{k+1}\setminus L_{k+1}^{k+1})} G_{k+1}(0, t) dv_{k+1}(t) -\frac1{\pi} \int_{L_{k+1}^{k+1}} G_{k+1}(0, t) dv_{k+1}(t) ,
\end{equation}

If we combine \eqref{indInt} with \eqref{Omk}, \eqref{Omk1}, we immediately get
\begin{equation}
\label{rhok1}
|\rho_{k+1} -\rho_k - \frac1{\pi} \int_{L_{k+1}^{k+1}} G_{k+1}(0, t) dv_{k+1}(t)| \le 2^{-k-1},
\end{equation}
if above we choose $\e$  accordingly.

\bigskip

Now we are going to estimate $\int_{L_{k+1}^{k+1}} G_{k+1}(0, t) dv_{k+1}(t) $. To do that we notice that
$w_{k+1}$  maps interval $L_{k+1}^{k+1}$ onto the half circle $\partial D_{k+1}^{k+1}$ in a one-to-one monotone fashion. In particular, the increase of $v_{k+1}$ on this interval is exactly twice the radius of this circle, which is $2u_{k+1}(c_{k+1})$.
Hence,
$$
\int_{L_{k+1}^{k+1}} dv_{k+1}(t) \le 2 u_{k+1}(c_{k+1}) \le 2\rho_{k+1}  M_{k+1}(c_{k+1}),
$$
where the last inequality follows just from \eqref{fla}. We got then
\begin{equation}
\label{vk1}
\int_{L_{k+1}^{k+1}} dv_{k+1}(t) \le 2\rho_{k+1}h_{k+1}.
\end{equation}

To estimate $\int_{L_{k+1}^{k+1}} G_{k+1}(0, t) dv_{k+1}(t) $ we are now left to estimate $G_{k+1}(0, t)$ uniformly, when $t\in L_{k+1}^{k+1}$.

Let us fix
$$
\e_{k}:= 2^{-{(k+1)}} h_{k+1},
$$
but let us move $\omega_{k+1}$ close to $-\infty$.

Recall that $L_{k+1}^{k+1} = (a_{k+1}^{k+1}, b_{k+1}^{k+1})$. We skip the indices and write $\omega:=\omega_{k+1}$, $L= (a, b)$. Of course, $a, b\to-\infty$ when $\omega\to -\infty$. Let us prove that eventually $|a|\le 2|b|$. Inessentially changing our normalization we can think that the conformal map $\Theta:=\Theta_{k+1}$ maps $A_0= i$ to $B_0=-1+i\in Q$. Map $\Theta$ maps $\C_+$ onto $\Pi:= \Pi_{k+1}$. Consider the new conformal map $\theta :=1/\Theta(1/z)$ mapping $a_0:=1/A_0$  into $b_0=1/B_0$. Let $\alpha$ be harmonic measure of $\C_+$ with respect to $a_0$, and $\beta$ be harmonic measure of $\mathcal{S}:= 1/\Pi$ (the image of $\Pi$ under the map $1/z$) with respect to $b_0$.

Suppose $|a|>2|b|$ when $\omega$ goes to $-\infty$ over a certain sequence. Then it is immediate that
$$
\alpha((1/b, 1/a))/\alpha((1/a, 0)) \ge c>0
$$
for all such large $\omega$, where $c$ does not depend of $\omega$ and is, in fact, an absolute constant. This is just because in $\C_+$ the harmonic measure of a small interval evaluated at $b_0$ is comparable with the lenth of the interval if it lies not far from $0$.  

By the principle of preservation of harmonic measure we then have
$$
\beta(1/T_{k+1})/\beta((1/\omega_{k+1}, 0)) \ge c>0,
$$
where $1/T_{k+1}$ is the image of $T_{k+1} =\{\omega_{k+1} +it, 0\le t\le h_{k+1}\}$ under the map $1/z$.
But it is easy to see that $\beta(1/T_{k+1} )\le C \frac{\sqrt{h_{k+1}}}{|\omega_{k+1}|}$ with absolute $C$,  for all large $\omega_{k+1}$. On the other hand, it is also easy to see that
$\beta((1/\omega_{k+1}, 0)) \ge c_k\frac{1}{\sqrt{|\omega_{k+1}|}}$ with positive $c_k$, which depends on $k$ but does not depend on $\omega_{k+1}$.
Then 
$$
0<c\le\beta(1/T_{k+1})/\beta((1/\omega_{k+1}, 0)) \le \frac{C}{c_k}\sqrt{\frac{h_{k+1}}{|\omega_{k+1}|}}.
$$
Tending $\omega_{k+1}$ to $-\infty$ and keeping $h_{k+1}$ fixed we come to contradiction.

\bigskip

Thus, we have just proved that for all large $\omega_{k+1}$
$$
|a_{k+1}^{k+1}|\le 2|b_{k+1}^{k+1}|.
$$

If we make the fractional linear transformation $m$
sending $0$ to infinity, infinity to $0$ and sending $b_{k+1}^{k+1}$ into point $1$, it will send $\Omega_{k+1}$ to a domain we will call $\mathcal{D}_{k+1}$. We can see that we need to estimate $G_{\mathcal{D}_{k+1}}(s, \infty)$ uniformly when $s\in I_{k+1}:= (m(a_{k+1}^{k+1}),1)$. 
We just proved 
$$
m(a_{k+1}^{k+1})\ge 1/2,\, |I_{k+1}|\le 1/2.
$$

Hence domain $\mathcal{D}_{k+1}$ has boundary, which definitely includes segments $[0,1/2]$ and $J_{k+1}:=[m(b_{k+1}^{k+1}), m(a_{k}^{k+1})]=[1, m(a_{k}^{k+1})]$. The latter segment has the lenght of the order $b_{k+1}^{k+1}/a_k^{k+1}$, which is as large as we wish. 

We saw that lacune $I_{k+1}=(m(a_{k+1}^{k+1}), 1)$ is contained in $(1/2, 1)$.
 By the principle of majorization of Green's function (bigger domain, bigger function, if measured at the same points) we can see now that
$$
\max_{s\in I_{k+1}} G_{\mathcal{D}_{k+1}}(s, \infty) \to 0, \,\, \text{when}\,\,\omega_{k+1}\to \infty, \,\, \text{if}\,\, |h_{k+1}| \,\,\text{stays fixed}.
$$
In fact, we just compare $G_{\mathcal{D}_{k+1}}(s, \infty)$, $s\in I_{k+1}$, with $G_{J_{k+1}}(s, \infty)$, where the latter is Green's function of $\C\setminus ([0,1/2]\cup  J_{k+1})$. The uniform smallness of $G_{J_{k+1}}(s, \infty)$ on $(1/2, 1)$  is obvious becaise $J_{k+1}$ becomes as long as we wish.

In particular, for large $\omega_{k+1}$ we have
$$
\max_{t\in L_{k+1}^{k+1}} G_{k+1}(0, t) \le \max_{s\in I_{k+1}} G_{\mathcal{D}_{k+1}}(s, \infty) \le 2^{-(k+1)} h_{k+1}^{-1}.
$$

Combine this with \eqref{vk1} to get 
$$
\int_{L_{k+1}^{k+1}} G_{k+1}(0, t) dv_{k+1}(t)  \le 2^{-k}\rho_{k+1}.
$$
This and \eqref{rhok1} give us
$$
\rho_{k+1}\le \rho_k + C 2^{-k}
$$ with absolute constant $C$. Then we get that $\rho_m$ will be bounded and this implies \eqref{intGv}.

We are done. We constructed the example with infinitly growing teeth, but with \eqref{intGv} and \eqref{Mu} satisfied, which means that the point mass will be lifted to pure point measure inspite of unboundedness of the teeth. 

\section{Martin function dominates outer functions. The proof of Theorem \ref{main1}.}
\label{Mouter}

A simple corollary of formula \eqref{mass} is 
that for any outer  function $F_{out}(z)$ in the unit disc $\D$ one can claim that
\begin{equation}
\label{outer1}
\lim_{r\rightarrow 1} \frac{1-r}{1+r}\log|F_{out}|(r) = 0.
\end{equation}

Notice that Martin's function for the disc $\D$ at point $1$ is exactly $M(z)=\frac{1+z}{1-z}$. Therefore, \eqref{outer1} means
\begin{equation}
\label{outer2}
\lim_{r\rightarrow 1} \frac{\log|F_{out}|(r)}{M(r)} = 0.
\end{equation}

It is natural to ask the same question in more complicated domains. At least in the case of our Denjoy type domain $\Omega= \C\setminus  E, E\subset \R_-$, we will prove the validity of this dominance. This is exactly the claim of Theorem \ref{main1}.

\medskip

\noindent
{\bf Remark.} Notice that this dominance in Denjoy domain does not follow from lifting  to $\D$ and the dominance \eqref{outer2} in $\D$. We already explained, that the lifting of $M$ in $\Omega$ may have nothing to do with Martin function in $\D$, because the Herglotz measure the lifting of $M$ in $\Omega$ to $\D$ can be even  continuous.

\begin{lem}
\label{omgrows}
Let $E$ be a compact set in $\R_-=(-\infty, 0)$. Let each point of $E$ be Dirichlet regular. Consider $O:=\C\setminus E$, and let $\omega(E', z)$ denote its harmonic measure, $E'\subset E$. Let $\omega_a(z):= \omega ((-\infty, a]\cap E, z)$, for $a\in \R_-$, where $a$ is a point of accumulation of $E$ from the left. Then for any such $a$ the function $x\to \omega_a(x)$ is non-decreasing on $\R_+$.
\end{lem}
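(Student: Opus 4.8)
The plan is to reduce the statement to a monotonicity assertion along a single boundary arc and then exploit the maximum principle. Write $u:=\omega_a$. Since $E$ is compact and contained in $(-\infty,0)$, the point $\infty$ is an \emph{interior} point of $O$, and the component of $\widehat{\R}\setminus E$ containing $\infty$ is an analytic arc $\Gamma$ running from $\max E$ to $\min E$; it contains all of $\R_+$. On $\Gamma$ the function $u$ is real (because $u(\bar z)=u(z)$), with boundary values $u(\max E)=0$ and $u(\min E)=1$ (using Dirichlet regularity of the endpoints; if $a\ge\max E$ then $E_{\mathrm{right}}:=(a,0)\cap E=\varnothing$, $u\equiv 1$, and there is nothing to prove, so assume $a<\max E$, whence $E_{\mathrm{left}}$ and $E_{\mathrm{right}}$ are non‑polar and $0<u<1$ in $O$). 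The reduction is then: it suffices to show that \emph{$u$ is non-decreasing along $\Gamma$ in the direction from $\max E$ to $\min E$}, since restricting this to $\R_+\subset\Gamma$ gives the Lemma.

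For the reduced statement I would argue by contradiction using level sets. If $u|_\Gamma$ were not monotone, a local maximum at an interior point $\zeta_0\in\Gamma$ produces, for a suitable regular value $v_0\in(0,1)$ (chosen by Sard just below $u(\zeta_0)$), a ``bump'': an open subarc $B\subset\Gamma$ with $u=v_0$ at its (transversal) endpoints on $Z_{v_0}:=\{u=v_0\}$ and $u>v_0$ on $B$, with $B$ touching neither endpoint of $\Gamma$. Let $W$ be the component of $\{u>v_0\}$ containing $B$ (it straddles $\R$ since $B\subset\Gamma\subset O$). Then $\partial W\subseteq Z_{v_0}\cup E_{\mathrm{left}}$, because on $\partial O=E$ the super-level set $\{u\ge v_0\}$ meets only $E_{\mathrm{left}}$ (where $u=1$). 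If $\overline W$ missed $E_{\mathrm{left}}$, then $u\equiv v_0$ on $\partial W$ while $u>v_0$ in $W$, contradicting the maximum principle; hence $\overline W$ meets $E_{\mathrm{left}}$. The symmetric argument applied to $v_0-u$ shows every component of $\{u<v_0\}$ meeting $\Gamma$ has closure meeting $E_{\mathrm{right}}$. One then plays these two facts against the cyclic order of $E_{\mathrm{left}}$, $E_{\mathrm{right}}$, $\Gamma$ around $\widehat{\R}$ (noting $\max E\in E_{\mathrm{right}}$ sits at the $\max E$-end of $\Gamma$ and $E_{\mathrm{left}}$ at the $\min E$-end): the bump $B$ together with the adjacent ``$<v_0$'' subarcs yields a Jordan curve formed from a path in $\overline W$ and part of $\Gamma$ which separates $E_{\mathrm{left}}$ from $E_{\mathrm{right}}$ in a manner incompatible with the connecting paths just produced inside $\overline W$ and $\overline{W'}$; this is the desired contradiction.

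An equivalent, cleaner packaging — which I would prefer if the relevant fact is conveniently citable — is to observe that $F:=u+i\widetilde u$ is holomorphic on $\C_+$ with $F(\C_+)$ inside the strip $\{0<\operatorname{Re}w<1\}$; since $u(\bar z)=u(z)$ and $u$ is harmonic across $\R\setminus E$, all critical points of $u$ lie on $\R$ (inside the bounded gaps of $E$, where two sub-/super-level components first meet, which by symmetry occurs on the axis), so $F'\neq 0$ on $\C_+$ and, by the comb-domain theory as in \cite{EYu}, $F$ is in fact a conformal map of $\C_+$ onto a slit strip. Because $\widetilde u$ is constant on $\Gamma$ (as $\partial_x\widetilde u=-\partial_y u=0$ on $\R\setminus E$ and the two half-lines making up $\Gamma$ join through the interior point $\infty$), $F$ sends $\Gamma$ onto a horizontal segment; injectivity of $F$ then forces $u=\operatorname{Re}F$ to be strictly monotone along $\Gamma$, from $0$ to $1$, hence non-decreasing on $\R_+$.

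The main obstacle is the same in both packagings: ruling out ``folded'' behaviour of $u$ over the unbounded arc $\Gamma$ — equivalently, upgrading $F'\neq 0$ to global injectivity of $F$, or carrying out the Jordan-curve bookkeeping in the level-set argument. A secondary technical point is the case when $a$ lies in the interior of a component of $E$, where $\omega_a$ is discontinuous at $a$; one handles this by using the hypothesis that $a$ is a left-accumulation point (taking $a$ to be the endpoint of a gap) or by a limiting argument, and a routine approximation of $E$ by finite unions of intervals may be inserted at the start if one prefers smooth boundaries.
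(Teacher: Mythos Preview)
Your high-level strategy --- level sets of $\omega_a$, the maximum principle, and a topological argument exploiting the symmetry $\omega_a(\bar z)=\omega_a(z)$ --- matches the paper's, but you stop exactly where the paper does the work. You yourself name ``carrying out the Jordan-curve bookkeeping'' (equivalently, upgrading $F'\neq0$ to global injectivity) as the main unresolved obstacle; in the second packaging the claim that all critical points of $u$ lie on $\R$ is asserted but not proved (symmetry alone does not give this), and invoking comb-domain injectivity for harmonic measures rather than for Green/Martin functions would itself need justification. As written this is an outline with the central step missing, not a proof.

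The paper closes the gap by a short, direct construction that sidesteps global injectivity and Sard-type genericity entirely. Given $0<x<y$ with $\omega_a(y)<t<\omega_a(x)$, let $\mathcal C$ be the component of $\{\omega_a<t\}$ containing $y$. The maximum principle forces $\overline{\mathcal C}$ to meet $E$, and the left-accumulation hypothesis on $a$ then pins the contact point $b$ in $E\cap(a,0]$. Connect $b$ to $y$ by a piecewise-linear arc $\gamma\subset\mathcal C$ with no horizontal segments and set $\Gamma=\gamma\cup\bar\gamma$; by symmetry $\omega_a<t$ on all of $\Gamma$. The interval $[b,y]$ is covered by the closures of the bounded components $B_i$ of $\C\setminus\Gamma$, and each $\partial B_i\subset\Gamma\cup\big(E\cap[b,0]\big)$, on which $\omega_a$ is either $<t$ or $=0$. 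Hence $\omega_a<t$ throughout these components, in particular at $x$, contradicting $\omega_a(x)>t$. This mirror-path trick is precisely the missing ``bookkeeping'': rather than controlling $u$ along the whole unbounded arc or producing a separating Jordan curve, one traps the single offending point $x$ inside a bounded region whose entire boundary already obeys the desired inequality.
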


\begin{proof}
Let $0<x<y$, $a<0$, and let $\omega_a(y)<\omega_a(x)$.  We want to come to a contradiction. Let $\omega_a(y)<t<\omega_a(x)$. Consider the set $\{z\in \C\cup\infty: \omega_a(z) <t\}$. It is an open set in $\C\cup\infty$ containing $y$. It may or may not contain $\infty$.  What we care about is whether its connected component $\mathcal{C}$ containing $y$ is such that  it contains any point of $E$. Supoose it does not. Clearly $\omega_a(z)$ is a harmonic function in $\mathcal{C}$ and it is continuous in $\bar{\mathcal{C}}$. It is identically $t$ on the boundary of $\mathcal{C}$ (with the possible exception of point $a$, if $a$ happens to be in its boundary), hence inside $\mathcal{C}$ it is also $t$. This is a contradiction with $\omega_a(y)<t$.

Consequently, $\mathcal{C}$ contains some point $b$ of $E$.
% but $\mathcal{C}$ is open, so it contains a point $b$ lying very close to $E$ and $b\in R_-$ (we recall that %the whole set $E$ lies in $\R_-$).  
Let us show that $b\ge a$. Suppose $b<a$. Then
$$
\omega_a(z)=1, \, \text{for all}\,\,z\in E\,\,\text{in a small neighbourhood of}\,\, b,
$$
and this means that inside the domain $\mathcal{C}$ there are points where value of $\omega_a$ is as close to $1$ as we wish. But these values (by the definition of $\mathcal{C}$) should be  strictly smaller than $t<1$. This is a contradiction.

Let us show that $b\neq a$. Suppose $b=a$.  We use that $a$ is a point of accumulation of $E$ from the left. Then again  inside the domain $\mathcal{C}$ there are points where value of $\omega_a$ is as close to $1$ as we wish, and this is a contradiction.

Points $b, y$ lie in the open connected set $\mathcal{C}$, so they can be connected by a  piecewise linear arc $\gamma$ inside $\mathcal{C}$. We can assume without the loss of generality that each straight segment of $\gamma$ is not parallel to $\R$. 

On $\gamma$ our function $\omega_a(z)$ is strictly smaller than $t$. By symmetry the same holds on $\bar{\gamma}$. Lets us put $\Gamma:=\gamma\cup\bar{\gamma}$ and consider  the component of infinity of $\C\setminus \Gamma$, let it be $A$, and let the  bounded components be $B_1,\dots, B_n$.  Notice three things: 
\begin{itemize}
\item[1)] as $b\in E$, then $b<0$,
\item [2)] $[b, y] \subset \R\cap\bigcup_{i=1}^n \bar{B_i}$, 
\item [3)] on $\bar{B_i}$ function $\omega_a(z)$ is strictly smaller thant $t$. In fact, this is true on the boundary of each domain $B_i$. Every point of $\partial B_i$  is in $\Gamma\cup (E\cap [b,0])$, and $b>a$,  so $\omega_a(z)$ is either less than $t$ or vanishes at every boundary point, and then maximal principle works.
\end{itemize}

Combining these properties we see that  
$$
\omega_a(z)<t\,\, \forall\,\, z\in [b, y].
$$
But $b<0<x<y$, so $\omega_a(x)<t$. This contradicts the choice of $t$. We are done.
\end{proof}

In what follows we always assume that all points of the boundary are Dirichlet regular.

\smallskip
\noindent
{\bf Remark.} The fact that $a$ is a point of accumulation of $E$ from the left was used in the proof, but it is not essential for the statement. In fact, if $a$ has no points of $E$ in $(a-\e, a)$, one can just change $a$ without changing function $\omega_a$.

\begin{lem}
\label{twopo}
Let $E\subset (-\infty, 0)$. Let $\Omega=\C\setminus E$.  Let $b<a, b, a\in E$. Then
$$
\frac{\omega_a(x)}{\omega_a(0)} \le \frac{\omega_b(x)}{\omega_b(0)},\, \forall x >0.
$$
\end{lem}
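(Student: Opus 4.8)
The plan is to reduce this monotonicity-in-pole statement to Lemma \ref{omgrows}, which says that $x \mapsto \omega_a(x)$ is non-decreasing on $\R_+$ for a pole moving along the positive axis. First I would consider the two positive harmonic functions $\omega_a$ and $\omega_b$ on $\Omega$, both vanishing quasi-everywhere on $E$ (all points of $E$ being Dirichlet regular by standing assumption). On $E$ itself, $\omega_a = \mathbf 1_{(-\infty,a]\cap E}$ and $\omega_b = \mathbf 1_{(-\infty,b]\cap E}$ as boundary values. Since $b < a$, we have $(-\infty,b]\cap E \subset (-\infty,a]\cap E$, hence $\omega_b \le \omega_a$ pointwise on $\Omega$ by the maximum principle. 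This already gives one half of a comparison, but it is not yet the claimed ratio inequality; the ratio inequality is a statement about the \emph{shape} of these functions along $\R_+$, normalized at $x=0$.

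The key step is to introduce the auxiliary positive harmonic function
\begin{equation*}
W(x) := \frac{\omega_b(0)}{\omega_a(0)}\,\omega_a(x) - \omega_b(x)
\end{equation*}
on the component of $\Omega$ containing $\R_+$ (which is connected, since $E\subset\R_-$), and to show $W(x)\le 0$ for all $x>0$. We know $W(0)=0$. My plan is to exploit the structure of $\omega_b$: the set $E_b := (-\infty,b]\cap E$ on which $\omega_b$ has boundary value $1$ is an ``initial segment'' of $E$, and relative to the pole moving toward $+\infty$ along $\R_+$, Lemma \ref{omgrows} tells us both $\omega_a$ and $\omega_b$ are non-decreasing on $\R_+$. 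The precise mechanism I would use is the following: consider the function $\omega_b$ restricted to $\R_+$ and the function $\omega_a$ restricted to $\R_+$; both are increasing, both vanish ``at $-\infty$'' in the appropriate sense, and $\omega_b$ is harmonic across every gap of $E$ to the left of $b$ where $\omega_a$ may jump. I expect the cleanest route is to write $\omega_a = \omega_b + \omega_{(b,a]\cap E}$, where $\omega_{(b,a]\cap E}(z) := \omega((b,a]\cap E, z)$ — the harmonic measure of the ``annular'' piece of $E$ strictly between $b$ and $a$. Then the claimed inequality $\omega_a(x)/\omega_a(0) \le \omega_b(x)/\omega_b(0)$ becomes, after clearing denominators and using $\omega_a = \omega_b + \omega_{(b,a]}$,
\begin{equation*}
\omega_b(0)\,\omega_{(b,a]}(x) \le \omega_{(b,a]}(0)\,\omega_b(x), \qquad x>0,
\end{equation*}
i.e. $\omega_{(b,a]}(x)/\omega_b(x)$ is non-increasing... no: it is the statement that $\omega_{(b,a]}(x)/\omega_b(x) \le \omega_{(b,a]}(0)/\omega_b(0)$. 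So it suffices to prove that $x\mapsto \omega_{(b,a]}(x)/\omega_b(x)$ is non-increasing on $\R_+$, or at least bounded above by its value at $0$.

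To establish that, I would argue by a boundary-comparison/maximum-principle argument in the spirit of the proof of Lemma \ref{omgrows}. Both $\omega_{(b,a]}$ and $\omega_b$ are positive harmonic on $\Omega$; the quotient $q := \omega_{(b,a]}/\omega_b$ is well-defined and positive on the component containing $\R_+$. On the part of $\partial\Omega$ to the left of $b$, the numerator has boundary value $0$ while the denominator has boundary value $1$, so $q$ has boundary value $0$ there; on the part between $b$ and $a$, numerator is $1$, denominator is ... here is the subtlety: $\omega_b$ does not have boundary value $1$ on $(b,a]\cap E$. This is exactly why a naive quotient argument stalls, and I expect this to be \textbf{the main obstacle}. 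The fix is to not take the quotient on all of $\Omega$ but to use the specific geometry: restrict attention to $\R_+$ and invoke that, by Lemma \ref{omgrows} applied with pole $x$ and the threshold sets, both functions are monotone and the ``mass'' controlling $\omega_{(b,a]}(x)$ is, relative to $x$, farther away (more screened) than the mass controlling $\omega_b(x)$, because $(b,a] \subset (-\infty, a]$ sits to the \emph{right} of $(-\infty, b]$, hence closer to the pole when the pole is in $\R_+$ — wait, that is the wrong direction. Let me instead run the argument the way Lemma \ref{omgrows}'s proof runs: suppose $q(y) > q(x)$ for some $0<x<y$, pick $t$ strictly between, look at the connected component $\mathcal C$ of $\{q < t\}$ containing $y$, show $\mathcal C$ must meet $\partial\Omega$, show the boundary point it meets must lie in $(-\infty, b]\cap E$ (where $q=0<t$, contradiction) because on $(b,a]\cap E$ and on $(a,0)$ the function $q$ behaves controllably — on $(b,a]\cap E$, $\omega_{(b,a]}=1$ but $\omega_b < 1$ forces $q>1>t$, so $\mathcal C$ cannot reach those points; on the open gaps both are harmonic. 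A reflection across $\R$ (using $\omega$-symmetry) and the same ``bounded components of $\C\setminus\Gamma$'' topology as in Lemma \ref{omgrows} then closes it. I would present the argument in this direct form rather than via the quotient-on-$\Omega$ shortcut, mirroring the structure already established in the proof of Lemma \ref{omgrows}, since that proof has already set up exactly the topological lemma about components needed here.
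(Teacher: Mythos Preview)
Your strategy is exactly the paper's: set up the harmonic function $u(z)=\omega_b(z)/\omega_b(0)-\omega_a(z)/\omega_a(0)$ (your $-W/\omega_b(0)$), assume $u(x)<-\e$ at some $x>0$, take the component $O$ of $\{u<-\e\}$ through $x$, locate a point $c\in E\cap O$ with $c>b$, connect $c$ to $x$ by a path $\gamma\subset O$, reflect to $\Gamma=\gamma\cup\bar\gamma$, and apply the maximum principle on the bounded components of $\C\setminus\Gamma$ (boundary values are $<-\e$ on $\Gamma$ and $\le 0$ on $E\cap(b,0)$, since $\omega_b=0$ there) to force $u(0)<0$, contradicting $u(0)=0$.

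Two remarks. First, the detour through $q=\omega_{(b,a]}/\omega_b$ is harmless but unnecessary: $\{q<t\}=\{\omega_{(b,a]}-t\,\omega_b<0\}$ is already a sublevel set of a harmonic function, so there is no real ``obstacle''; the paper simply stays with the linear combination from the start. Second, your final sketch has the directions tangled: if $q(y)>t>q(x)$ then $y$ lies in $\{q>t\}$, not $\{q<t\}$; on $(b,a]\cap E$ one has $\omega_b=0$ so $q\to+\infty$ (not merely $>1$), hence that part of $E$ is \emph{accessible} to the $\{q>t\}$ component, which is precisely what lets you pick $c\in(b,a]$ and run the $\Gamma$-argument to trap the reference point $0$ (or $x$). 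Once those signs are straightened, your outline and the paper's proof coincide.
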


\begin{proof}
We can first prove this for compact $E$ and then this follows for  $E$ containing $-\infty$ by an obvious limiting procedure.

We assume first that $b$ is an accumulation point of $E$ on the left. We saw that this assumption is without loss of generality.
 Let $A=1/\omega_a(0), B=1/\omega_b(0)$. Consider function 
$$
u(z):=B\omega_b(z) - A\omega_a(z)
$$
and suppose that for some $x>0$ we have $u(x)<-\e<0$. Let $O$ be a connected component of the set, where $u<-\e$, containing point $x$. The open set $O$ must conatin some point of $E$, otherwise we come to contradiction as in the previous lemma.

Suppose $c\in E$ and $c<b$. Notice that $B\ge A$ and $\omega_b(c)=\omega_a(c)=1$. Then $u(c)\ge 0$. This is impossible. By the same reason $c\neq b$. Otherwise, using that $c=b$ is an accumulation point of $E$ on the left, we get some points inside $O$, where $u\ge 0$.

Hence, $c>b$. We again connect $c$ and $x$ by a piecewise linear path $\gamma$ inside the domain $O$.
We may assume that all straight segments of $\gamma$ are not parallel to $\R$.
Again we consider $\Gamma=\gamma\cup\bar{\gamma}$ on which $u<-\e$ by construction. 
Let $\{B_i\}_{i=1}^n$ be bounded connected components of $\C\setminus \Gamma$. 
Let us look at $\partial B_i$. Its points belong either to $\Gamma$ (where $u<-\e$), or to $E\cap (b, 0)$. But at such points $\omega_b(z)=0$. Therefore, at such points $u(z) \le 0$. 

We conclude that $u<0$ at any point of $\bar{B_i}\setminus E$, $i=1,\dots, n$. 

As in the previous lemma, we obviously see that $[c,x] \subset \R\cap \bigcup_{i=1}^n\bar{B_i}$. Hence, for any $z\in  [c,x]$ which is not in $E$ we conclude $u(z)<0$. But point $0$ is exactlly in $[c,x]\setminus E$. So $u(0)<0$. But by definition $u(0)=0$. We came to contradiction and proved the lemma.

\end{proof}

\begin{thm}
\label{Momega}
Let $E\subset (-\infty, 0)$. Let $\Omega=\C\setminus E$. We assume that $\infty\in \partial \Omega$. Let $M$ be its Martin function at infinity normalized as follows: $M(0)=1$. Let $a\in E$ and $\omega_a$ be harmonic measure of $E\cap(-\infty, a]$ with respect to $\Omega$. Then
$$
\frac{\omega_a(x)}{\omega_a(0)} \le M(x), \, \forall x>0.
$$
\end{thm}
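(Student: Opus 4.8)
The idea is to exhibit the Martin function $M$ as a limit of the very normalized harmonic measures that appear in Lemma~\ref{twopo}, and then to pass to the limit in that lemma. Put $c:=1/\omega_a(0)$ (which is positive, since $E$ is unbounded and all of its points are Dirichlet regular); the assertion to be proved is $c\,\omega_a(x)\le M(x)$ for $x>0$, recalling $M(0)=1$. Choose $b_n\in E$ with $b_n\to-\infty$ and set $v_n:=\omega_{b_n}/\omega_{b_n}(0)$. For every $n$ with $b_n<a$, Lemma~\ref{twopo} (applied with the pair $b_n<a$ in $E$) gives $c\,\omega_a(x)\le v_n(x)$ for all $x>0$. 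Hence it suffices to prove that, along a subsequence, $v_n(x)\to M(x)$ for $x>0$.

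The functions $v_n$ form a normal family: each $v_n$ is positive and harmonic in $\Omega$ with $v_n(0)=1$, so by Harnack's inequality $\{v_n\}$ is locally uniformly bounded in $\Omega$; pass to a subsequence with $v_n\to v$ locally uniformly, $v$ positive harmonic, $v(0)=1$. The crux is to show that $v$ has boundary value $0$ at every point of $E$. This cannot come from a crude size estimate, because $\omega_{b_n}(0)\to0$ and the normalization blows up; instead one localizes. Fix $t_0\in E$ and a small disc $D=D(t_0,r)$. For all $n$ with $b_n$ sufficiently negative one has $D\cap E\subset E\cap(b_n,0]$, so $v_n$ vanishes on $D\cap E$, and the maximum principle in $\Omega\cap D$ gives $v_n(z)\le S_n\,\omega_{\Omega\cap D}(\partial D\cap\Omega,\,z)$ for $z\in\Omega\cap D$, where $S_n:=\sup_{\partial D\cap\Omega}v_n$. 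Since $v_n\to v$ locally uniformly and each point of $E$ is regular, $S_n$ stays bounded (the only care needed is at the at most two points $t_0\pm r$ of $\partial D$ that may lie on $E$, handled by the barrier at a regular boundary point). Letting $n\to\infty$ and then $z\to t_0$, and using that $t_0$ is a regular boundary point of $\Omega\cap D$ while the harmonic measure above carries data $0$ near $t_0$, we conclude $v(z)\to0$ as $z\to t_0$. Thus $v$ is a positive harmonic function on $\Omega$ vanishing on all of $E$ with $v(0)=1$; by the standing assumption that the cone of such functions is one-dimensional, $v=M$.

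Combining, for $x>0$ we get $c\,\omega_a(x)\le\lim_n v_n(x)=v(x)=M(x)$, which is exactly the claim; as a by-product $M=\lim_{b\to-\infty}\omega_b/\omega_b(0)$, the convergence being monotone on $\R_+$ by Lemma~\ref{twopo}. The step I expect to be the main obstacle is precisely the vanishing of the limit $v$ on $E$ — and inside it, the uniform boundedness of $S_n$. A self-contained alternative mimics the proof of Lemma~\ref{twopo} directly: set $u:=M-c\,\omega_a$, so $u(0)=0$ and $u\le0$ on $E$ (because $M$ vanishes on $E$), and $u$ is continuous up to $\overline\Omega$ except at the single point $a$. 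If $u(x_0)<0$ for some $x_0>0$, let $O$ be the component of $\{u<-\e\}$ containing $x_0$; on $O$ one has $M<c$, hence $u$ is bounded there, so (a single boundary point at $\infty$ being negligible for bounded harmonic functions) $\overline O$ must contain a point $b'<0$ of $E$. Joining $x_0$ to $b'$ by a simple arc inside $O$, reflecting it in $\R$ to a symmetric curve $\Gamma=\gamma\cup\overline\gamma$, and applying the maximum principle on the bounded complementary components of $\Gamma$ — on whose boundaries $u\le0$, with strict inequality on $\Gamma$, and which together cover $[b',x_0]\ni0$ — forces $u(0)<0$, contradicting $u(0)=0$. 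Hence $u\ge0$ on $\R_+$.
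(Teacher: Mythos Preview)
Your first approach is exactly the paper's: the paper's entire proof reads ``We use $M(z)=\lim_{b\to-\infty}\omega_b(z)/\omega_b(0)$. Then the claim follows from Lemma~\ref{twopo} immediately.'' The paper treats this limit formula for $M$ as known (it invokes the analogous Green's-function version $M(x)=\lim_{k\to\infty}G(c_k,x)/G(c_k,0)$ elsewhere without proof), so the normal-family/boundary-vanishing argument you sketch is extra work the paper does not spell out. Your identification of the delicate point --- uniform control of $S_n$ near the possible intersection points $t_0\pm r\in E$ --- is accurate; one way to sidestep it is to choose $r$ so that $t_0\pm r\notin E$ (two points to avoid on a continuum of radii), making $\partial D\cap\Omega$ compact in $\Omega$.

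Your second, self-contained route is a genuine alternative: it replays the connected-component/reflection argument of Lemma~\ref{twopo} with $M$ in place of $B\,\omega_b$, and avoids any appeal to the limit representation of $M$. What it buys is that you never have to justify the Martin-function-as-limit formula; what it costs is the extra care at $\infty$ (which you handle via boundedness of $M$ on $O$) and at the discontinuity point $a$ of $\omega_a$. One small imprecision: you write ``joining $x_0$ to $b'$ by a simple arc inside $O$'' with $b'\in E$, but $b'\notin\Omega$; as in the paper's proof of Lemma~\ref{twopo} you should pick a point of $O$ \emph{near} $b'$ (or near $E\cap\overline O$) and run the argument from there.
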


\begin{proof}
We use $M(z)=\lim_{b\to-\infty}\frac{\omega_b(z)}{\omega_b(0)}$. Then the claim follows fro Lemma \ref{twopo} immediately.

\end{proof}

\begin{lem}
\label{twop1}
Let $E\subset (-\infty, 0)$. Let $\Omega=\C\setminus E$.  Let $b<a, b, a\in E$.  Let $\omega_{ba}(z) :=\omega_a(z)-\omega_b(z)=\omega([b,a], z)$. Then
$$
\frac{\omega_{ba}(x)}{\omega_{ba}(0)} \le \frac{\omega_b(x)}{\omega_b(0)},\, \forall x >0.
$$
 We assume that $\infty\in \partial \Omega$. Let $M$ be  Martin function at infinity normalized as follows: $M(0)=1$. Then
$$
\frac{\omega_{ba}(x)}{\omega_{ba}(0)} \le M(x),\, \forall x >0.
$$
\end{lem}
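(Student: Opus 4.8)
The plan is to deduce the second (Martin) inequality from the first one together with Theorem \ref{Momega}, and to prove the first inequality by repeating, essentially verbatim, the contradiction-via-maximum-principle argument used for Lemma \ref{twopo}, with the harmonic function $\omega_a$ there replaced by $\omega_{ba}=\omega([b,a],\cdot)$. As a preliminary reduction I would first treat compact $E$ (the case of unbounded $E$ then following by the obvious limiting procedure, as in Lemma \ref{twopo}), and I would assume without loss of generality that $b$ is an accumulation point of $E$ from the left, since sliding $b$ into a complementary interval of $E$ changes neither $\omega_b$ nor $\omega([b,a],\cdot)$ (cf.\ the Remark after Lemma \ref{omgrows}).

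For the first inequality, set $A:=1/\omega_{ba}(0)$, $B:=1/\omega_b(0)$, and consider $u:=B\,\omega_b-A\,\omega_{ba}$, a harmonic function in $\Omega$ with $u(0)=0$; we want $u(x)\ge0$ for all $x>0$. Suppose, for contradiction, that $u(x)<-\e<0$ for some $x>0$, and let $O$ be the connected component of $\{u<-\e\}$ containing $x$. As in Lemma \ref{twopo}, $O$ must contain a point $c\in E$, for otherwise $u$ would be harmonic in $O$, continuous on $\bar O$, identically $-\e$ on $\partial O$, hence identically $-\e$ in $O$. The point-location step is actually easier here than before: if $c<b$ then $\omega_b(c)=1$ while $\omega_{ba}(c)=\omega([b,a],c)=0$, so $u(c)=B>0$, contradicting $c\in O$; and $c=b$ is impossible because, $b$ being an accumulation point of $E$ from the left, points $c'\in E$ with $c'<b$, at which $u(c')=B>0$, lie arbitrarily close to $c$ inside the open set $O$. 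Hence $b<c<0$.

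From here I would run the topological part of the proof of Lemma \ref{twopo} unchanged: join $c$ to $x$ by a piecewise linear arc $\gamma\subset O$ no segment of which is parallel to $\R$, put $\Gamma:=\gamma\cup\bar\gamma$ (so that $u<-\e$ on $\Gamma$), and let $B_1,\dots,B_n$ be the bounded components of $\C\setminus\Gamma$. Then $[c,x]\subset\R\cap\bigcup_i\bar B_i$ and, because $c>b$, every point of $E$ lying in $\bigcup_i\bar B_i$ belongs to $(b,0)$; at such a point $\omega_b$ vanishes, so $u=-A\,\omega_{ba}\le0$ there, while $u<-\e<0$ on $\Gamma$. The maximum principle on each $B_i\cap\Omega$ then forces $u<0$ on $\bigcup_i\bar B_i\setminus E$, in particular $u(0)<0$ since $0\in(c,x)\setminus E$, contradicting $u(0)=0$. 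This yields $\omega_{ba}(x)/\omega_{ba}(0)\le\omega_b(x)/\omega_b(0)$ for all $x>0$.

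The Martin bound is then immediate: by what was just proved, $\omega_{ba}(x)/\omega_{ba}(0)\le\omega_b(x)/\omega_b(0)$, and by Theorem \ref{Momega} applied with base point $b$ we have $\omega_b(x)/\omega_b(0)\le M(x)$ for $x>0$; chaining the two gives the claim. The only delicate point in the whole argument is the geometric assertion in the third paragraph that $\bigcup_i\bar B_i$ meets $E$ only inside $(b,0)$, but this is exactly the fact established inside the proof of Lemma \ref{twopo} (and Lemma \ref{omgrows}), so nothing new is needed there; everything specific to $\omega_{ba}$ boils down to the two sign observations that $\omega_{ba}\equiv0$ on $E\cap(-\infty,b]$ and $\omega_b\equiv0$ on $E\cap(b,0)$.
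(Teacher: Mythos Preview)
Your proof is correct, and for the second (Martin) inequality it coincides with the paper's: both chain the first inequality with Theorem \ref{Momega}.

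For the first inequality, however, you take a longer route than the paper. You rerun the full maximum-principle/contradiction argument of Lemma \ref{twopo} with $\omega_a$ replaced by $\omega_{ba}$; this works, and your observation that the location step ($c>b$) is actually easier here is right. But the paper dispatches the first inequality in one line as a ``direct consequence of Lemma \ref{twopo}'': since $\omega_a=\omega_b+\omega_{ba}$ with all quantities positive, the inequality $\omega_a(x)\,\omega_b(0)\le \omega_b(x)\,\omega_a(0)$ from Lemma \ref{twopo} expands to $(\omega_{ba}(x)+\omega_b(x))\,\omega_b(0)\le \omega_b(x)\,(\omega_{ba}(0)+\omega_b(0))$, and cancelling $\omega_b(x)\,\omega_b(0)$ gives $\omega_{ba}(x)\,\omega_b(0)\le \omega_b(x)\,\omega_{ba}(0)$, i.e.\ the desired bound. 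So no new topological argument is needed; the content is the elementary mediant-type fact that if $\tfrac{p+r}{q+s}\le\tfrac{r}{s}$ with all entries positive then $\tfrac{p}{q}\le\tfrac{r}{s}$. Your approach buys a self-contained proof independent of Lemma \ref{twopo}, while the paper's approach makes clear that Lemma \ref{twop1} carries no new analytic input beyond Lemma \ref{twopo}.
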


\begin{proof}
The first inequality is a direct consequence of Lemma \ref{twopo}. The second inequality combines the first one and Theorem \ref{Momega}.

\end{proof}

\begin{thm}
\label{main12}
Let $\Omega=\C\setminus E$. We assume that $\infty\in \partial \Omega$. Let $M$ be its Martin function at infinity. Let $\phi$ be a function summabe with respect to harmonic measure $\omega(., z)$ of $\Omega$. Then 
$$
\int\phi(t) \omega (dt, x) = o(M(x)),\,\, x\to +\infty.
$$
\end{thm}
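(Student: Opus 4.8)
\textbf{Proof plan for Theorem \ref{main12}.}

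The plan is to reduce the general integrable case to the special "indicator-of-a-tail" estimate already established in Theorem \ref{Momega} (and its companion Lemma \ref{twop1}), and then to pass to a general $\phi$ by a standard dominated/exhaustion argument. First I would record the elementary observation that both sides of the claimed identity are, for fixed $x$, continuous linear functionals of $\phi$ on $L^1(d\omega(\cdot,z_0))$ for any reference point $z_0$ (using Harnack to compare $\omega(\cdot,x)$ with $\omega(\cdot,z_0)$), so it suffices to prove the estimate for a dense class of $\phi$ and then take limits --- the error terms are uniformly $O(M(x))$ by Theorem \ref{Momega}, hence negligible after dividing by $M(x)$ and letting $x\to+\infty$. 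Concretely: for $\phi\ge 0$ integrable, write $\phi$ as an increasing limit of simple functions $\phi_N=\sum_j c_j^N \mathbf{1}_{A_j^N}$ with $A_j^N$ of the form $[b,a]\cap E$; the difference $\phi-\phi_N$ has $L^1$-norm tending to $0$, and $\int(\phi-\phi_N)\,\omega(dt,x)\le C \,\|\phi-\phi_N\|_{L^1(d\omega(\cdot,0))}\, M(x)$ by Lemma \ref{twop1} applied to each constituent tail (more precisely, by the monotonicity of harmonic measure of tails one controls the integral of the positive part of $\phi - \phi_N$ at $x$ by its mass near $-\infty$, times $M(x)$).

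Next, for a fixed simple $\phi_N$, the quantity $\int \phi_N(t)\,\omega(dt,x)$ is a finite linear combination of terms $\omega_{ba}(x) = \omega([b,a],x)$ with $b,a\in E$, $b<a$. So the heart of the matter is to show $\omega([b,a],x) = o(M(x))$ as $x\to+\infty$ for each \emph{fixed} pair $b<a$ in $E$. Here the point is that $[b,a]$ is a ``bounded piece'' of the boundary, far (in the harmonic-measure sense) from $\infty$. I would argue as follows: $\omega([b,a],\cdot)$ is a bounded positive harmonic function on $\Omega$ vanishing q.e.\ on $E\setminus[b,a]$ and with boundary values $\le 1$; by Lemma \ref{twop1} we already have $\omega([b,a],x)\le \omega([b,a],0)\,M(x)$, i.e.\ $\limsup_{x\to\infty}\omega([b,a],x)/M(x)\le \omega([b,a],0)<\infty$. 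To upgrade $O(M(x))$ to $o(M(x))$, apply the same inequality with $b$ replaced by a point $b'\in E$ with $b'<b$ and let $b'\to-\infty$: Lemma \ref{twop1} gives $\omega([b,a],x)/M(x) \le \omega([b,a],0)/\omega_{b'}(0)\cdot \bigl(\omega_{b'}(x)/(\omega_{b'}(0)M(x))\bigr)$ --- rather, the cleanest route is: $\omega([b,a],\cdot)\le \omega([b',a],\cdot)$ and $\omega([b',a],0)\to M$-mass$\to 0$? That is not literally true, so instead I would use that for any $\e>0$ one can pick $b'$ so close to $-\infty$ that the harmonic function $\omega([b,a],\cdot)$ is dominated on $\Omega$ by $\e\,\omega_{b'}(\cdot)$ (compare on the boundary: on $E\cap(-\infty,b']$ the right side is $\e$ while the left is $0$; on $[b',0]\cap E$ the left side is $\le 1$ but is actually uniformly small there once $[b,a]$ is a fixed arc and $b'$ is far --- this needs a short separate lemma, see below), whence $\omega([b,a],x)\le \e\,\omega_{b'}(x)\le \e\,\omega_{b'}(0)\,M(x)$, and letting $\e\to0$ gives the conclusion.

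\textbf{Main obstacle.} The delicate point --- and the step I expect to do real work on --- is the last comparison: showing that a \emph{fixed} bounded arc of the boundary carries $o(M(x))$ of the harmonic measure at $x\to+\infty$, equivalently that $\omega([b,a],x)/M(x)\to 0$ rather than merely staying bounded. Lemma \ref{twop1} only gives boundedness; squeezing out the little-$o$ requires either (i) a strict-inequality/strong-maximum-principle refinement of Lemma \ref{twopo} showing the ratio $\omega_{ba}(x)/\omega_b(x)$ is strictly decreasing along $\R_+$ and tends to $0$, or (ii) the domination-by-$\e\omega_{b'}$ trick above, whose crux is the uniform smallness of $\omega([b,a],z)$ for $z$ on $E\cap[b',0]$ close to the far point $b'$ --- i.e.\ a quantitative statement that harmonic measure of a fixed arc, viewed from deep inside a long stretch of boundary to its left, decays. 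This can be obtained from the Green's-function estimate \eqref{Gom} together with a Harnack chain / self-similarity-free version of the argument used in Lemma \ref{MGseries}, or directly from Beurling-type projection estimates for harmonic measure in $\C\setminus[b',0]$. Modulo that one quantitative decay lemma, the rest of the proof is the routine linearity-plus-density reduction sketched above.
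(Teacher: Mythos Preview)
Your reduction is essentially correct, but you have missed the one-line observation that dissolves your ``main obstacle'': harmonic measure is a probability, so $\omega([b,a],x)\le 1$ for every $x$, and hence $\omega([b,a],x)/M(x)\le 1/M(x)\to 0$ simply because $M(x)\to+\infty$ as $x\to+\infty$. No Beurling projection estimates, no $\e\,\omega_{b'}$ domination trick, no strict-monotonicity refinement of Lemma~\ref{twopo} is needed; the step you flag as the heart of the matter is in fact trivial.

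With that fix your plan works, and it is then the paper's own argument in slightly heavier clothing. The paper avoids the simple-function detour and writes directly $\rho(t,x):=\dfrac{\omega(dt,x)}{\omega(dt,0)}$, splitting
\[
\int_E \phi(t)\,\omega(dt,x)=\int_{-\infty}^N+\int_N^0 .
\]
On the tail $(-\infty,N]$, Lemma~\ref{twop1} gives $\rho(t,x)\le M(x)$ uniformly in $t$, so this piece is at most $M(x)\int_{-\infty}^N|\phi|\,\omega(dt,0)<\e\, M(x)$ once $|N|$ is large. On $[N,0]$, Lemma~\ref{twop1} together with the trivial bound $\omega_N(x)\le 1$ yields $\rho(t,x)\le \omega_N(x)/\omega_N(0)\le 1/\omega_N(0)=:C_N$, so this piece is at most $C_N\|\phi\|_{L^1(\omega(\cdot,0))}$, a constant independent of $x$, hence $o(M(x))$. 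Your simple-function approximation packages the same two ingredients --- the uniform $\rho\le M$ bound for the far part and the crude $\omega\le 1$ bound for the near part --- just less directly.
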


\begin{proof}
Let $N<0$. Let $\rho(t, x):=\frac{\omega (dt, x)}{\omega(dt,0)}$. Then
$\int\phi(t) \omega (dt, x)  = \int_{-\infty}^N\phi(t) \rho(t,x) \omega(dt,0) +\int_N^0\phi(t) \rho(t,x)\omega (dt, 0) =: I+II$. By Lemma \ref{twop1} we conclude that uniformly $\rho(t, x) \le M(x)$. Hence
$$
I\le M(x)\int_{-\infty}^N \phi(t) \omega(dt, 0) \le \e M(x)
$$
if $N$ is large enough by absolute value because of the fact that $\phi\in L^1(d\omega)$.

We notice that by Lemma \ref{twop1} we can write that  for $t\in [N, 0]\cap E$
$$
\rho(t,x) \le \frac{\omega_N(x)}{\omega_N(0)}\le \frac{1}{\omega_N(0)}=: C_N.
$$
Then $II/ M(x) \le (C_N \int |\phi(t)| \omega(dt, 0))/M(x)$, and it tends to zero when $x$ tends to plus infinity just because $M(x)$ goes to infinity. Theorem is proved.

\end{proof}

Notice that we just proved Theorem \ref{main1}.

\section{Martin function dominates the logarithm of Blaschke product with zeros on $\R_-$}
\label{MB}

We already proved that $h\le a$, see \eqref{ah}. Now we want to show some cases when we can prove the equality $h=a$.

Let $\Omega=\C\setminus E$. Recall that all point of $E$ are assumed to be Dirichlet regular. We assume that $\infty\in \partial \Omega$.  We can assume that
\begin{equation}
\label{infty}
\text{infinity is also a Dirichlet regular point of}\,\, E.
\end{equation}
Actually for us it will be enough to have another condition \eqref{MG} below.
Let $M$ be its Martin function at infinity.  For what follows we would need 
\begin{equation}
\label{MG}
\forall c\in \R_- \forall x\in \R_+, \,\, G(c, x) \le C(c)<\infty.
\end{equation}

\noindent
{\bf Remark.} Notice that some condition of the type \eqref{MG} is needed, because otherwise it may happen
that there is no Martin function at infinity at all, or it may happen that $M(x)\asymp \log x, x\to+\infty,$ and $G(c, x)\asymp \log x, x\to +\infty,$ and then the statement  below is obviously wrong. Assumption \eqref{MG} is satisfied, for example, if $\infty$ is a Dirichlet regular point of the boundary $E$ of the domain $\Omega$.

\begin{thm}
\label{MBth}
Let $\{c_k\}_{k=1}^\infty$ be points on $\R_-$, $c_1>c_2>\dots >c_m>\dots$. Let $\sum_{k=1}^\infty G(c_k, 0) <\infty$. Then 
$$
\sum_{k=1}^\infty G(c_k, x) = o(M(x)), \,\, x\to +\infty,
$$
if assumption \eqref{MG} is satisfied.
\end{thm}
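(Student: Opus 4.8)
The plan is to reduce Theorem~\ref{MBth} to a single pointwise comparison between the Green functions with negative real pole and the Martin function, and then to sum up using the hypothesis $\sum_k G(c_k,0)<\infty$. Throughout, normalize $M$ by $M(0)=1$; this tacitly uses $0\notin E$, which is forced by the hypothesis $\sum_k G(c_k,0)<\infty$ being non-vacuous (otherwise replace $0$ by any fixed $P\in\R_+\setminus E$). Two facts provided by \eqref{MG} (respectively by the regularity of $\infty$) will be used: each $G(c,\cdot)$ is bounded on $\R_+$ and tends to $0$ at $\infty$, and $M(x)\to+\infty$ as $x\to+\infty$.

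The key step is the estimate
$$
G(c,x)\ \le\ M(x)\,G(c,0),\qquad c\in\R_-\setminus E,\ \ x>0 .
$$
I would prove it exactly in the style of Lemmas~\ref{omgrows} and~\ref{twopo}. Put $u(z):=M(z)\,G(c,0)-G(c,z)$, which is harmonic on $\Omega\setminus\{c\}$, vanishes on $E$ (both $M$ and $G(c,\cdot)$ vanish there by Dirichlet regularity), tends to $-\infty$ at $c$, satisfies $\liminf u\ge 0$ along $\partial\Omega$ and at $\infty$ (near $\infty$, $M\ge0$ and $G(c,\cdot)\to 0$), and has $u(0)=0$. Suppose $u(x_0)<-\e<0$ for some $x_0>0$, and let $O$ be the connected component of the open set $\{u<-\e\}$ containing $x_0$. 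Since $u>-\e$ outside a compact set and on a neighbourhood of $E$, the set $O$ is bounded and disjoint from $E$; and $O$ must contain the pole $c$, for otherwise $u$ would be harmonic in $O$ with boundary values $-\e$, forcing $u\equiv-\e$, a contradiction. Now $c<0<x_0$, so, as in the proof of Lemma~\ref{twopo}, join $c$ to $x_0$ by a piecewise-linear arc $\gamma\subset O$ with no segment parallel to $\R$, set $\Gamma:=\gamma\cup\overline\gamma$, and consider the bounded components $B_i$ of $\C\setminus\Gamma$. On each $\partial B_i$ one has $u<-\e$ (on $\Gamma$) or $u=0$ (on $E$), so $u<0$ in $B_i\setminus E$, while $[c,x_0]\subset\Gamma\cup\bigcup_i\overline{B_i}$; hence $u(z)<0$ for every $z\in[c,x_0]\setminus E$, and in particular $u(0)<0$, contradicting $u(0)=0$. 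This proves the displayed inequality. (Conceptually it says that $G(c,\cdot)/G(c,0)$ increases to $M$ as the pole $c$ runs off to $-\infty$, in parallel with $M=\lim_{b\to-\infty}\omega_b/\omega_b(0)$.)

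Granting this, the theorem follows quickly. Note first that $\sum_k G(c_k,x)\le M(x)\sum_k G(c_k,0)<\infty$, so the series converges for every $x>0$. Fix $\e>0$ and choose $N$ with $\sum_{k>N}G(c_k,0)<\e$. Then for every $x>0$,
$$
\frac{1}{M(x)}\sum_{k=1}^\infty G(c_k,x)=\sum_{k=1}^N\frac{G(c_k,x)}{M(x)}+\frac{1}{M(x)}\sum_{k>N}G(c_k,x)\ \le\ \sum_{k=1}^N\frac{G(c_k,x)}{M(x)}+\sum_{k>N}G(c_k,0),
$$
where the tail is bounded by the displayed inequality applied termwise. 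For each fixed $k$, $G(c_k,x)$ stays bounded on $\R_+$ by \eqref{MG} while $M(x)\to+\infty$, so $G(c_k,x)/M(x)\to 0$; being a finite sum, $\sum_{k\le N}G(c_k,x)/M(x)\to 0$ as $x\to+\infty$. Hence $\limsup_{x\to+\infty}M(x)^{-1}\sum_k G(c_k,x)\le\e$, and since $\e>0$ is arbitrary, $\sum_k G(c_k,x)=o(M(x))$.

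The main obstacle is the comparison $G(c,x)\le M(x)G(c,0)$: one must run the symmetrization/maximum-principle argument cleanly past the logarithmic singularity at $c$, and — this is where the behaviour at $\infty$ enters — rule out the sub-level component $O$ escaping to infinity, which needs $G(c,\cdot)$ bounded near $\infty$ and $M$ large there, i.e. precisely the content of \eqref{MG} (or of assuming $\infty$ Dirichlet regular). Everything after that is the same bookkeeping as in Theorem~\ref{main12}: split the series at a finite level and dominate the remaining tail by the convergent sum $\sum_k G(c_k,0)$.
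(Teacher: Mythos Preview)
Your proposal is correct and follows essentially the same approach as the paper. The only difference is packaging: the paper first isolates a monotonicity lemma (Lemma~\ref{2G}: $G(a,x)/G(a,0)\le G(b,x)/G(b,0)$ for $b<a$) and then uses $M(x)=\lim_{b\to-\infty}G(b,x)/G(b,0)$ to obtain the key inequality $G(c,x)\le M(x)G(c,0)$, whereas you prove that inequality directly by running the same symmetrization/maximum-principle argument with $u=M\,G(c,0)-G(c,\cdot)$ in place of the difference of two normalized Green functions; the splitting of the sum and the use of \eqref{MG} for the finite head are identical to the paper's.
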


We need the following lemma.

\begin{lem}
\label{2G}
Let $a, b\in \R_-\cap \Omega$, $b<a$. Then
$$
\frac{G(a, x)}{G(a,0)}\le \frac{G(b, x)}{G(b,0)},\,\, \forall x\in \R_+.
$$
\end{lem}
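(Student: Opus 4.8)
The plan is to argue by contradiction, following the scheme of Lemmas~\ref{omgrows}--\ref{twop1}. Put $A:=1/G(a,0)$ and $B:=1/G(b,0)$ (both finite and positive, since $a,b\in\Omega$ and $0\in\Omega$) and set
\[
u(z):=B\,G(b,z)-A\,G(a,z)=\frac{G(b,z)}{G(b,0)}-\frac{G(a,z)}{G(a,0)}.
\]
Then $u$ is harmonic in $\Omega\setminus\{a,b\}$, continuous on $\C\setminus\{a,b\}$, it vanishes on $E=\partial\Omega$ (all boundary points are Dirichlet regular), it is symmetric, $u(\bar z)=u(z)$ (the set $E$, hence both Green functions, is symmetric about $\R$), $u(0)=0$, and $u\to-\infty$ at the pole $a$ while $u\to+\infty$ at the pole $b$. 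The asserted inequality is exactly $u(x)\ge 0$ for $x\in\R_+$, so suppose instead that $u(x_0)<-\e<0$ for some $x_0>0$ and some $\e>0$. As in the quoted lemmas it suffices to treat compact $E$ (the general case then follows by exhausting $E$ from inside and passing to the limit), which also keeps $u$ controlled near $\infty$.

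Let $O$ be the connected component of the open set $\{u<-\e\}$ that contains $x_0$. Since $u$ is continuous up to $E$ with boundary value $0>-\e$ there, the closure $\overline O$ is disjoint from $E$; and since $u\to+\infty$ at $b$, a neighbourhood of $b$ has $u>-\e$, so $b\notin\overline O$. If moreover $a\notin\overline O$, then $u$ is harmonic on $O$ and identically $-\e$ on $\partial O$, hence $u\equiv-\e$ on $O$ by the maximum principle, contradicting $u(x_0)<-\e$; therefore $a\in\overline O$, and since $u\to-\infty$ at $a$ we in fact have $a\in O$. Thus $O$ is an open connected set, symmetric about $\R$, containing both $a<0$ and $x_0>0$.

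Now I would run the planar topology step as in the proof of Lemma~\ref{twopo}: join $a$ to $x_0$ by a piecewise-linear arc $\gamma\subset O$ no segment of which is horizontal, set $\Gamma:=\gamma\cup\bar\gamma\subset O$ (so $u<-\e$ on $\Gamma$), and let $B_1,\dots,B_n$ be the bounded components of $\C\setminus\Gamma$. As there, $[a,x_0]\subset\R\cap\bigcup_i\overline{B_i}$ and $\partial B_i\subset\Gamma$. Provided the pole $b$ lies in none of the $B_i$ (discussed below), $u$ is harmonic in $B_i\setminus E$, and on $\partial(B_i\setminus E)\subset\Gamma\cup(E\cap\overline{B_i})$ it satisfies $u\le-\e<0$ on the $\Gamma$-part and $u=0$ on the $E$-part; thus $u\le 0$ on $\partial(B_i\setminus E)$, strictly below zero on the nonempty relatively open $\Gamma$-piece, so the strong maximum principle yields $u<0$ throughout $B_i\setminus E$. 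Since $0\in(a,x_0)$, $0\notin E$, and $0\in\bigcup_i\overline{B_i}$, this gives $u(0)<0$, contradicting $u(0)=0$. Hence $u\ge 0$ on $\R_+$, which is the lemma.

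The delicate ingredients are the same as in Lemmas~\ref{omgrows} and~\ref{twopo}: the planar-topology claim that the reflected arc $\Gamma$ separates $[a,x_0]$ from $\infty$, and the control of $O$ near $\infty$ (handled by the reduction to compact $E$, or by an explicit hypothesis such as \eqref{MG}). The one feature absent from Lemma~\ref{twopo} is the pole of $G(b,\cdot)$ at $b$: one must verify that $b$ is not trapped inside a bounded component $B_i$ (otherwise $u=+\infty$ there would wreck the maximum-principle step). I expect this to be the main obstacle. It should be possible to handle it by replacing the poles at $a$ and $b$ by small circles, passing to the corresponding bounded harmonic measures $\omega_{\Omega\setminus\overline{D(a,r)}}(\partial D(a,r),\cdot)\sim G(a,\cdot)/\log(1/r)$ so that the argument of Lemma~\ref{twopo} applies verbatim to a bounded function, and letting the radii tend to $0$ at the end — but the bookkeeping that keeps the second circle out of the relevant bounded complementary component is exactly the point that needs care.
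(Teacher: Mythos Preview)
Your approach is essentially the paper's: define $u=B\,G(b,\cdot)-A\,G(a,\cdot)$, suppose $u(x_0)<-\e$, take the component $O$ of $\{u<-\e\}$ through $x_0$, show $b\notin\overline O$ and $a\in O$, run a polygonal arc $\gamma\subset O$ from $a$ (the paper uses a nearby real point $r$ instead, so that $u(r)<-\e$ is an honest inequality) to $x_0$, reflect to $\Gamma=\gamma\cup\bar\gamma$, and apply the maximum principle in the bounded components $B_i$ of $\C\setminus\Gamma$ to force $u(0)<0$.

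Where you diverge is exactly at the point you flag, the pole at $b$. The paper does \emph{not} avoid it by excising small discs and passing to harmonic measures as you propose. It simply allows $b$ to land in one bounded component, say $B_1$, and observes that in every remaining $B_i$ the function $u$ is \emph{subharmonic} (harmonic away from $a$, and $u\to-\infty$ at $a$), so the maximum principle on $B_i\setminus E$ still gives $u<0$ there. The residual claim is that $[r,x_0]\subset\bigcup_{i\ge2}\overline{B_i}$, i.e.\ the component trapping $b$ never meets $[r,x_0]$; the paper dismisses this with ``of course''. A clean way to justify it is to choose $\gamma$ from the outset as a \emph{simple} arc in $\overline{\C_+}$ touching $\R$ only at its two endpoints (possible since $O$ is symmetric: reflect any arc into $\overline{\C_+}$, remove loops, and push interior real touches slightly upward inside the open set $O$). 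Then $\Gamma$ is a Jordan curve whose unique bounded complementary region meets $\R$ precisely in $(r,x_0)$, while $b<r$ sits in the unbounded component and the issue evaporates. So the obstacle you identify is genuine, but its resolution is considerably lighter than the circle-excision limit you sketch.
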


\begin{proof}
Suppose that for $x\in \R_+$ we have an opposite inequality. Denote $B:=\frac{1}{G(b,0)}, A :=\frac{1}{G(a,0)}$ and consider
$$
u(z):= B G(b, z) - A G(a, z).
$$
We assumed that $u(x)<-\e<0$. Consider the connected component 
of the open set $\{z\in \C: u(z) <-\e\}$ that contains point $x$. Call this component $O$. 

Domain $O$ cannot contain $b$ inside. This is clear, because at the vicinity of $b$ function $G(b, z)$ is close to $+\infty$, and so $u(z)$ has the same property. Moreover, $b\notin \bar{O}$ by the same reason.

Hence,  $u$ is subharmonic in $O$. Suppose $a \notin O$, and $E\cap O=\emptyset$. Then $u$ is harmonic in $O$, and its values on $\partial O$ are identically $-\e$. This is obviously true except may be one point of $\partial O$, namely, $a$, if it so happened that $a\in \partial O$. But $a$ cannot belong to the boundary of $O$, otherwise there would be a sequence of distinct points on this boundary, where $u<-\e$, which is impossible. 

Then automatically harmonic in $O$ function $u$ being identically $-\e$ on $\partial O$ will be identically $-\e$ in $O$. This is a contradiction with $x\in O$.

Hence, we have to assume that either $a\in O$ or $E\cap O\neq \emptyset$. Suppose $a\notin O$.  Then $E\cap O\neq \emptyset$.
We can choose then a point $z\in O$ close  to $E\cap O$,  where $G(a, z)$ and $G(b,z)$ are as close to $0$ as we wish (Dirichlet regularity assumption). Then $u(z)\ge -\e/2$. But on the boundary of $O$ our subharmonic function $u$  is $-\e$. Maximum principle for subharmonic functions does not allow this.

There is only one possibility left: $a\in O$. Then we choose a point $r\in \R_-\cap O$ close to $a$ at which we have
$$
u(r)<-\e.
$$
Points $r$ and $x$ are in the same connetced open set $O$. Hence we can connect them by a piecewise linear path $\gamma$ inside $O$, consisting of finitely many straight segments. Each straight segment can be chosen not to be parallel to $\R$.

As before, denote $\Gamma := \gamma\cup\bar{\gamma}$, and notice that $u(z) <-\e, \forall z\in \gamma$. Then, by symmetry,
$$
u(z) <-\e, \forall z\in \Gamma.
$$

Again consider all bounded components of $\C\setminus \Gamma$, let them be $\{B_i\}_{i=1}^n$.

Obviously $[r, x]\subset \bigcup_{i=1}^n \bar{B_i}$. By construction,  $\gamma$ is inside $O$, and, by symmetry, $\Gamma\subset O$. It may happen by chance that $b$ belongs to one of these domains $B_i$, let us say to $B_1$. Then of course $[r, x]\subset \bigcup_{i=2}^n \bar{B_i}$. In each of $B_i$ that does not contain $b$ function $u$ is sabharmonic, it is $<-\e$ on $\partial B_i\cap\Gamma$ and it is $=0$ on $\partial B_i\cap E$. Therefore, each $s\in [r, x]\setminus E$ is either a point of intersection of $\gamma$ and $\R$ or it is in $B_i\setminus E$, $i=2,\dots, n$. 

In the first case $u(s)<-\e$. In the second case $u(s)<0$ by subharmonicity of $u$ in $B_i$, $i=2, \dots, n$.

So $u(s)<0$ for all $s\in [r,x]\setminus E$. But point $0$ is exactly like that, so $u(0)<0$. But by definition of $u$ we have that $u(0)=0$. Having a contradiction we are done with the lemma.
\end{proof}

Now we can prove Theorem \ref{MBth}.

\begin{proof}
Let us write the sum (here $x\in \R_+$)
$$
\sum_{k\ge 1} G(c_k, x) = \sum_{k=1}^N G(c_k,x) + \sum_{k=N+1}^\infty \frac{G(c_k,x)}{G(c_k,0)} G(c_k, 0)=:I+II.
$$

Recall that $M(x)=\lim_{k\to\infty}\frac{G(c_k,x)}{G(c_k,0)}$.  By Lemma \ref{2G} it is a monotonically increasing limit. Hence, 
$$
II\le M(x) \sum_{k=N+1}^\infty G(c_k, 0) \le \e M(x)
$$
uniformly for all $x\in \R_+$ and for as small $\e$ as we wish, as soon as $N$ is sufficiently large. We used here the Blaschke  assumption $\sum_{k=1}^\infty G(c_k, 0) <\infty$ of the theorem.

Having $N$ fixed we just notice that for each $k=1,\dots, N,$ we have $G(c_k,x) /M(x)\to 0$ as $x\to +\infty$ just by assumption \eqref{MG}. Theorem is proved.

\end{proof}

Recall that domain $\Omega=\C\setminus E, E\subset R,$ is called Widom domain if 
$$
\sum_{c: \nabla G(c, 0)=0} G(c, 0)<\infty.
$$
For Widom domains function $W(z):=\sum_{c: \nabla G(c, 0)=0} G(c, z)$ is called Widom function. It plays an important part in Hardy space theory in such domains. Widom domains automatically have property \eqref{MG}.

\begin{cor}
\label{widom}
Let $\Omega$ be a Widom domain, $E\subset \R_-$. Let $W(z):=\sum_{c: \nabla G(c, 0)=0} G(c, z)$.
Then $W(x)= o(M(x)), \, x\to +\infty$. Moreover, for any Blaschke product $B_\Omega$ all of whose zeros lie in $\Omega\cap\R_-$ we have
$$
\log|B_\Omega(x)| = o(M(x)),\, x\to +\infty.
$$
\end{cor}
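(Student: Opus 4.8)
The plan is to derive both assertions directly from Theorem \ref{MBth}, whose hypotheses we must first verify in the Widom setting. The first step is to observe that a Widom domain automatically satisfies assumption \eqref{MG}: indeed, if for some $c\in\R_-$ the function $x\mapsto G(c,x)$ were unbounded on $\R_+$, then (by the principle of majorization and the structure of Green's functions on Denjoy domains) the critical points of $G(\cdot,0)$ in the gaps of $E$ accumulating at $-\infty$ would contribute divergent Green's mass, contradicting the Widom condition $\sum_{c:\nabla G(c,0)=0}G(c,0)<\infty$. So \eqref{MG} holds and Theorem \ref{MBth} is applicable to any sequence of points on $\R_-$ satisfying the Blaschke condition in $\Omega$.

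For the statement $W(x)=o(M(x))$, the subtlety is that the critical points $c$ with $\nabla G(c,0)=0$ need \emph{not} all lie on $\R_-$: for a Denjoy domain $\Omega=\C\setminus E$ with $E\subset\R_-$ they do, because the Green's function is symmetric under $z\mapsto\bar z$ and in each bounded complementary interval (gap) of $E$ in $\R_-$ there is exactly one critical point, which lies on the real axis, and there are no other critical points. Thus the sum defining $W$ is exactly a sum $\sum_k G(c_k,z)$ with $c_k\in\R_-\cap\Omega$, $c_k\to-\infty$, and $\sum_k G(c_k,0)<\infty$ by the Widom hypothesis. Applying Theorem \ref{MBth} verbatim gives $W(x)=o(M(x))$ as $x\to+\infty$.

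For the second assertion, let $B_\Omega$ be a Blaschke product all of whose zeros $\{z_k\}$ lie in $\Omega\cap\R_-$, so that $\log\frac{1}{|B_\Omega(z)|}=\sum_k G(z,z_k)=\sum_k G(z_k,z)$ (using symmetry of Green's function), and the Blaschke condition $\sum_k G(0,z_k)<\infty$ holds by definition. After relabeling so that the $z_k$ are listed in decreasing order along $\R_-$ (finitely many may coincide or need grouping, which does not affect convergence), Theorem \ref{MBth} applies and yields $\sum_k G(z_k,x)=o(M(x))$, i.e.\ $\log\frac{1}{|B_\Omega(x)|}=o(M(x))$, hence $\log|B_\Omega(x)|=o(M(x))$ as $x\to+\infty$. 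The main obstacle I anticipate is not any hard estimate — all the analytic work is in Theorem \ref{MBth} and Lemma \ref{2G} — but rather the bookkeeping: carefully justifying that the critical set entering $W$ lies on $\R_-$ and verifying \eqref{MG} from the Widom condition; once those are in place the corollary is an immediate application of the already-proven theorem.
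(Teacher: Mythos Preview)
Your proposal is correct and follows essentially the same route as the paper: the corollary is stated immediately after Theorem \ref{MBth} and is meant as a direct application of it, once one notes (as the paper asserts without proof just before the corollary) that Widom domains automatically satisfy \eqref{MG}. Your extra remarks---that the critical points of $G(\cdot,0)$ for a Denjoy domain with $E\subset\R_-$ lie in the real gaps, and that the Blaschke condition for $B_\Omega$ is exactly the hypothesis of Theorem \ref{MBth}---are the natural bookkeeping the paper leaves implicit.
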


\begin{cor}
\label{ha}
For any function $f$ of bounded characteristic in domain $\Omega=\C\setminus E, E\subset \R_-,$ such that
all its zeros lie in $\R_-$ and such that \eqref{MG} is satisfied, we have $h=a$, where $a$ is the number in the factorization Theorem \ref{fact}, and
$$
h=\limsup_{x\to +\infty} \frac{\log |f(x)|}{M(x)}.
$$
\end{cor}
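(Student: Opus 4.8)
The plan is to assemble Corollary \ref{ha} directly from the factorization Theorem \ref{fact} together with the two domination results already proved, namely Theorem \ref{main1} (the outer part is $o(M)$) and Theorem \ref{MBth}/Corollary \ref{widom} (the Blaschke part with zeros on $\R_-$ is $o(M)$). First I would recall that by Theorem \ref{fact} any $f$ of bounded characteristic in $\Omega=\C\setminus E$ with $E\subset\R_-$ factors as
$$
f = B_\Omega\cdot f_{out}\cdot e^{a(M+i\wt M)},
$$
so that on the positive real axis
$$
\log|f(x)| = \log|B_\Omega(x)| + \log|f_{out}(x)| + a\,M(x).
$$
Dividing by $M(x)$ and letting $x\to+\infty$, the first two terms vanish: $\log|f_{out}(x)|=o(M(x))$ by Theorem \ref{main1}, and $\log|B_\Omega(x)|=o(M(x))$ because all zeros $c_k$ of $B_\Omega$ lie in $\R_-\cap\Omega$ and $\sum_k G(c_k,0)<\infty$ is exactly the Blaschke condition, so Theorem \ref{MBth} applies under hypothesis \eqref{MG}. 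Hence
$$
h=\limsup_{x\to+\infty}\frac{\log|f(x)|}{M(x)} = a + \limsup_{x\to+\infty}\frac{\log|B_\Omega(x)|+\log|f_{out}(x)|}{M(x)} = a,
$$
since the remaining limsup is zero (indeed the limit exists and equals $0$).

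One bookkeeping point I would address explicitly: the quantity $h$ in Theorem \ref{fact} and in condition (2) defining $B_E(h)$ is defined via a limsup over all of $\Omega$ as $|z|\to\infty$, whereas the $h$ in the corollary is the limsup only along $\R_+$. The inequality $h\le a$ from \eqref{ah} is for the full-domain limsup, which dominates the axial one, so the axial limsup is $\le a$. For the reverse inequality $\ge a$ one uses the axial decomposition above: $\log|f(x)|/M(x) = a + o(1)$ along $\R_+$, giving axial limsup exactly $a$. Thus no conflict arises — the corollary is really the statement that under the stated hypotheses the full-domain $h$ and the axial one coincide and both equal $a$. I would phrase the proof so this is transparent, perhaps remarking that \eqref{MG} holds automatically for Widom domains (as noted before Corollary \ref{widom}) and in particular whenever $\infty$ is Dirichlet regular.

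The only genuine content has already been done in the earlier sections, so there is no real obstacle; the proof is a two-line combination. The one place to be slightly careful is that Theorem \ref{main1} as stated gives $\lim_{z\to\infty}\log|f_{out}(z)|/M(z)=0$ over all of $\Omega$ (which is stronger than what is needed here), while Theorem \ref{MBth} gives the Blaschke bound over $\R_+$ only; since the corollary's $h$ is an axial limsup, the axial versions of both suffice, and one simply adds the three asymptotics. I would also note in passing that the hypothesis ``all zeros of $f$ lie in $\R_-$'' is used precisely to ensure the Blaschke product $B_\Omega$ in the factorization has all its zeros in $\R_-\cap\Omega$, which is the hypothesis of Theorem \ref{MBth} (equivalently Corollary \ref{widom} in the Widom case). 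With these remarks in place the proof of Corollary \ref{ha} is immediate:

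\begin{proof}
By Theorem \ref{fact}, $f=B_\Omega\cdot f_{out}\cdot e^{a(M+i\wt M)}$ with all zeros of $B_\Omega$ in $\R_-\cap\Omega$ and $\sum_k G(c_k,0)<\infty$. For $x\in\R_+$,
$$
\frac{\log|f(x)|}{M(x)} = a + \frac{\log|B_\Omega(x)|}{M(x)} + \frac{\log|f_{out}(x)|}{M(x)}.
$$
By Theorem \ref{main1} the last term tends to $0$, and by Theorem \ref{MBth} (applicable since \eqref{MG} holds) so does $\log|B_\Omega(x)|/M(x) = \sum_k G(c_k,x)/(-M(x))$. Hence $\log|f(x)|/M(x)\to a$ as $x\to+\infty$, so $h = \limsup_{x\to+\infty}\log|f(x)|/M(x) = a$.
\end{proof}
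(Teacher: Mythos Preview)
Your proof is correct and follows essentially the same approach as the paper: the argument is precisely the one sketched right after the statement of Theorem \ref{main2}, namely that once both $\log|f_{out}(x)|/M(x)\to 0$ (Theorem \ref{main1}) and $\log|B_\Omega(x)|/M(x)\to 0$ (Theorem \ref{MBth} under \eqref{MG}) are available, the factorization of Theorem \ref{fact} gives $\log|f(x)|/M(x)\to a$ along $\R_+$, hence $h=a$. Your bookkeeping remark about the axial versus full-domain limsup is a fair clarification but not essential, since in the corollary $h$ is explicitly defined as the axial limsup.
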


\section{Another proof of Lemma \ref{2G}. Abelian integrals}
\label{abint}

We want to give another proof of this lemma based on  Abelian integrals, see Subsection \ref{comb}. Again we assume first (it is enough) that $E$ is a  subset of $\R_-$, say $E\subset (-\infty, a_0]$, $a_0<0$, and $E$ consists of finitely many  non-trivial intervals, so
$$
E= (-\infty, a_0]\setminus \bigcup_{k=1}^g (a_k, b_k).
$$
Let $b\in (a_{k_1},b_{k_1})$ and $a\in (a_{k_2},b_{k_2})$.
By \eqref{htog}
\begin{equation}\label{defgh}
\frac{G(z,b)}{G(0,b)}-\frac{G(z,a)}{G(0,a)}=\Re H(z,a,b),
\end{equation}
where
$$
H(z,a,b)=\int_{b_{k_1}}^z\frac{P_{g+1}(\lambda)}{(\lambda-a)(\lambda-b)}\frac{d\lambda}{s(\lambda)}, s^2=(z-a_0)\prod
_{j=1}^g(z-a_j)(z-b_j),
$$
and $P_{g+1}$ is a certain polynomial of degree $g+1$. Due to the normalization condition \eqref{periods} each of $g-2$ gaps, complimentary to $(a_{k_1},b_{k_1})$ and $(a_{k_2},b_{k_2})$, contains at least one zero $\xi_j$ of $P_{g+1}$. Moreover, since $\Re\, H(0,a,b)=0$, each of intervals $(a_0,0)$ and $(0,\infty)$ contains also  a zero of this polynomial. Thus, we were able to localize
$g$ zeros of $P_{g+1}(z)$. We claim that the remaining (with necessity real) zero $\xi$ of $P_{g+1}(z)$ belongs to the interval $(b,a)$.  

Assume that it does not. Then the standard arguments, related to  integrals of Schwarz-Christoffel type, show that
$\Re\, H(z,a,b)>0$ for all $z\in \Omega\cap(a,b)$. Indeed, by the definition $H(z,a,b)=\Re\, H(z,a,b)>0$ in $(b,b_{k_1})$. Then, if $z$ goes from $b$ to $a$ along the real line,  its image $w(z)=H(z,a,b)$ goes along straight lines with rotations by $\pi/2$ at the images of the end points $a_j$ and $b_j$ and by $\pi$ at $\xi_j\in(a_j,b_j)$. But, due to our assumption $\xi\not\in(b,a)$, we listed \textit{all}  switch-argument-points in this interval. Since  $w(z)$ remains to be pure imaginary at all points $z\in E\cap (a,b)$,  the integral $\Re\, w(z)$ has to be  positive in all gaps, including the interval 
$(a_{k_2},a)$.  That is,
$
\lim_{z\to a-0}\Re\, H(z,a,b)=+\infty
$
and this contradicts the definition \eqref{defgh}, according to which $\lim_{z\to a-0}\Re \, H(z,a,b)=-\infty$.

Thus $\xi\in (a,b)$. Then the same arguments, related to the Schwarz-Christoffel type integrals, show that
$\Re\, H(z,a,b)<0$ in all gaps inside $(a,0)$ including $(a_0,0)$ and $\Re\, H(z,a,b)>0$ in $(0,\infty)$, as well as in all gaps in $(-\infty,b)$. The lemma is proved.

\bigskip

\noindent
{\bf Remark.} Using Abel integrals approach one can also give a proof of Lemma \ref{twopo}. It is essentially more simple  since it does not require specification of a position of an ``extra zero" $\xi$ (the number of the critical zeros just coincides with the number of the specified intervals).

\end{document}